\newtheorem{thm}{Theorem}[section]
\newtheorem{lem}[thm]{Lemma}
\newtheorem{prop}[thm]{Proposition}
\theoremstyle{definition}
\newtheorem{df}[thm]{Definition}
\newtheorem{rk}[thm]{Remark}
\newtheorem{ex}[thm]{Example}
\newtheorem*{mainthm}{Theorem}
\newtheorem*{mainprop}{Proposition}
\begin{document}

\title{Ideal Whitehead Graphs in $Out(F_r)$ II:\\
The Complete Graph in Each Rank}
\author{Catherine Pfaff}
\date{}
\maketitle

\abstract{We show how to construct, for each $r \geq 3$, an ageometric, fully irreducible $\phi\in Out(F_r)$ whose ideal Whitehead graph is the complete graph on $2r-1$ vertices.

This paper is the second in a series of three where we show that precisely eighteen of the twenty-one connected, simplicial, five-vertex graphs are ideal Whitehead graphs of fully irreducible $\phi \in Out(F_3)$. The result is a first step to an $Out(F_r)$ version of the Masur-Smillie theorem proving precisely which index lists arise from singular measured foliations for pseudo-Anosov mapping classes.

In this paper we additionally give a method for finding periodic Nielsen paths and prove a criterion for identifying representatives of ageometric, fully irreducible $\phi\in Out(F_r)$}

\section{Introduction}

\noindent In \cite{ms93} Masur and Smillie list precisely which singularity index lists arise from the pair of invariant foliations for a pseudo-Anosov mapping class. The index lists were significant in their stratification of the space of quadratic differentials into strata invariant under the Teichmuller flow. Several papers studying the stratification include \cite{kz03}, \cite{l04}, \cite{l05}, and \cite{z10}. While $Out(F_r)$ index theory has been developed in papers such as \cite{gjll}, \cite{gl95}, \cite{ch10}, and \cite{ch12}, this is the first on index realization.

Our search for ideal Whitehead graphs arising from fully irreducible free group outer automorphisms is motivated by our goal of determining the $Out(F_r)$-version of the Masur-Smillie theorem. An ideal Whitehead graph (see Section \ref{Ch:PrelimDfns}) is a finer invariant than a singularity index list and encodes information about the attracting lamination for a fully irreducible outer automorphism.

We construct, for each $r \geq 3$, an ageometric, fully irreducible $\phi\in Out(F_r)$ whose ideal Whitehead graph is the complete ($2r-1$)-vertex graph. We consequently prove:

\begin{mainthm} \emph{Let $\mathcal{C}_r$ denote the complete ($2r-1$)-vertex graph. For each $r \geq 3$, there exists an ageometric, fully irreducible $\phi \in Out(F_r)$ such that $\mathcal{C}_r$ is the ideal Whitehead graph $\mathcal{IW}(\phi)$ for $\phi$.}
\end{mainthm}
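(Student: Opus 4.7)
The plan is to exhibit, for each $r \geq 3$, an explicit graph self-map $f_r \colon R_r \to R_r$ on the rose with $r$ petals and verify that it is a train track representative of an outer automorphism $\phi_r$ with all the required properties. I would specify $f_r$ by writing each image $f_r(e_i)$ as an explicit reduced word in $\{e_1^{\pm 1}, \dots, e_r^{\pm 1}\}$, chosen so that the transition matrix is Perron--Frobenius, no illegal cancellation occurs between consecutive $f_r$-images of edges (so $f_r$ is genuinely a train track map), and exactly one orbit of illegal turns sits at the unique vertex, corresponding to a single indivisible periodic Nielsen path.

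With such an $f_r$ in hand I would first verify it is a train track, then invoke the criterion for identifying representatives of ageometric, fully irreducible elements proved earlier in the paper to conclude that $\phi_r$ is ageometric and fully irreducible. Using the method for finding periodic Nielsen paths also developed earlier, I would locate the unique indivisible periodic Nielsen path $\rho$ explicitly and determine which pair of directions at the vertex it identifies. Since $R_r$ has only one vertex, the ideal Whitehead graph $\mathcal{IW}(\phi_r)$ is then obtained from the single local Whitehead graph at that vertex by collapsing this pair, giving a graph with $2r-1$ vertices whose edges record all turns taken by some $f_r^n(e_i)$, modulo the $\rho$-identification.

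The main obstacle, and the combinatorial core of the theorem, is designing the words $f_r(e_i)$ so that every one of the $\binom{2r-1}{2}$ potential edges is realized, making the resulting graph complete. This must be achieved simultaneously with the train-track, single-indivisible-Nielsen-path, and irreducibility constraints, and uniformly in $r$. The natural route is an inductive construction that builds the rank-$r$ example from a carefully engineered rank-$3$ base case by successive petal additions, so that exhaustion of all pairs of directions can be verified inductively alongside preservation of the dynamical hypotheses. A secondary delicate point is ensuring that no \emph{unwanted} extra periodic Nielsen paths are created as $r$ grows, since additional iNps would force further vertex identifications and change the graph away from $\mathcal{C}_r$.
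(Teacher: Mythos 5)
Your outline is not the paper's route, and its central mechanism fails. You propose to build a train track whose unique illegal turn ``corresponds to a single indivisible periodic Nielsen path'' and then to read off $\mathcal{IW}(\phi_r)$ from the local Whitehead graph at the vertex by collapsing the pair of directions identified by that pNp. Two things go wrong. First, the ideal Whitehead graph is not assembled from the full local Whitehead graph $\mathcal{LW}(g)$ (which has up to $2r$ vertices) but from the \emph{stable} Whitehead graph $\mathcal{SW}(g)$, whose vertices are only the periodic directions, with any iNP identifications applied afterwards. An iNP forces an illegal turn, i.e.\ two directions with the same image under some $Dg^k$, and at most one of those two directions can be periodic; so as soon as your representative carries an iNP, $\mathcal{SW}(g)$ has at most $2r-1$ vertices, and identifying the two (periodic, and in the nondegenerate case distinct) directions along which the iNP leaves the vertex only lowers the count further, to at most $2r-2$. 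Thus ``$\mathcal{LW}$ with one pair collapsed'' cannot yield the complete graph on $2r-1$ vertices. Second, a unique illegal turn does not produce, nor correspond to, an iNP: every representative constructed in the paper has a unique illegal turn and yet is pNp-free, and arranging exactly that is the role of the Nielsen path prevention sequence (Lemma \ref{L:NPKilling}) and the identification procedure (Proposition \ref{P:NPIdentification}). The correct mechanism for achieving $2r-1$ vertices is the opposite of yours: a pNp-free representative with exactly one nonperiodic (unachieved) direction, so that $\mathcal{IW}(\phi)\cong\mathcal{SW}(g)$ with no collapsing at all.

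Your plan is also internally inconsistent with the tools you intend to invoke. The Full Irreducibility Criterion requires condition (I), that $g$ has \emph{no} periodic Nielsen paths, and Lemma \ref{P:RepresentativeLoops} likewise assumes pNp-freeness; moreover ageometricity is exactly the absence of pNp's in the stable representative, so a map engineered to carry an iNP could not be fed into the paper's criterion and would leave you owing a separate argument that $\phi_r$ is ageometric at all. Finally, the ``combinatorial core'' you defer --- realizing all $\binom{2r-1}{2}$ edges simultaneously with the PF and pNp-free constraints, uniformly in $r$ --- is precisely what the paper supplies, and not by induction on petal additions: for each $r$ one writes the representative as an ideal decomposition into proper full folds prescribed by explicit construction paths and loops in the ltt structures (Lemmas \ref{L:ConstructionPathSmooth}, \ref{P:PathConstruction}, \ref{L:BirecurrentCompleteGraphltts}, which guarantee the purple edges of those paths appear in $\mathcal{IW}$), followed by a switch sequence forcing the PF property (Lemma \ref{L:SwitchSequence}) and the Nielsen path prevention sequence; without some substitute for that machinery your proposal has no mechanism for the exhaustion-of-edges step either.
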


That the ($2r-1$)-vertex complete graph occurs as an ideal Whitehead graph in each rank is both nonobvious and significant. First, the complete graph has no cut vertices. This phenomena holds significance for other versions of Whitehead graphs (see, for example, \cite{c10}, \cite{mm10}, \cite{s99}). Additionally, by \cite{hm11}, cut vertices in an ideal Whitehead graph have implications about periodic Nielsen paths. Second, the ($2r-1$)-vertex complete graph is a connected graph yielding the index sum $\frac{3}{2}-r$. This sum is as close as possible to that of $1-r$, achieved by \emph{geometrics} (fully irreducibles induced by pseudo Anosov surface homeomorphisms), without being achieved by a geometric outer automorphism. As in \cite{p12b}, we denote the set of connected ($2r-1$)-vertex simplicial graphs by $\mathcal{PI}_{(r; (\frac{3}{2}-r))}$. Third, the existence of such complicated ideal Whitehead graphs highlights significant depth within $Out(F_r)$ theory, beyond that of mapping class groups, as the ideal Whitehead graph of any pseudo-Anosov is simply a disjoint union of circles.

To show that our examples indeed represent fully irreducibles, we prove a folk lemma, Proposition \ref{P:FIC}, the ``Full Irreducibility Criterion (FIC).'' \cite{k12} gives another criterion inspired by our FIC.

\begin{mainprop}{\ref{P:FIC}} \textbf{(The Full Irreducibility Criterion)}
\emph{Let $g$ be a train track representing an outer automorphism $\phi \in Out(F_r)$ such that
~\\
\vspace{-6mm}
\begin{itemize}
\item [(I)] $g$ has no periodic Nielsen paths, \\[-6mm]
\item [(II)] the transition matrix for $g$ is Perron-Frobenius, and \\[-6mm]
\item [(III)] all local Whitehead graphs $\mathcal{LW}(x;g)$ for $g$ are connected.
\end{itemize}
\indent Then $\phi$ is fully irreducible.}
\end{mainprop}

We address three issues when applying the criterion to a train track representative $g\colon \Gamma \to \Gamma$. First, we must verify that $g$ has no ``periodic Nielsen paths.'' Recall \cite{bh92} that, for a train track map $g\colon \Gamma \to \Gamma$, a nontrivial path $\rho$ in $\Gamma$ is called a \emph{periodic Nielsen Path (pNp)} if, for some $k$, $g^k(\rho) \simeq \rho$ rel endpoints. We ensure our particular representatives are pNp-free using a ``Nielsen path prevention sequence,'' see Lemma \ref{L:NPKilling}. Proposition \ref{P:NPIdentification} provides a method for identifying pNp's of train track maps \emph{ideally decomposed} in the sense of \cite{p12b} (a method of a different nature can be found in \cite{t94}). This procedure can also be used to prove that an ideally decomposed representative has no pNp's. Second, we need that the local Whitehead graph $\mathcal{LW}(x;g)$ at each vertex $x$ of $\Gamma$ is connected, a condition satisfied in our case by the ideal Whitehead graph being connected. Third, the FIC includes a condition on the transition matrix (as defined in \cite{bh92}) for $g$, satisfied when some $g^n$ maps each edge of $\Gamma$ over each other edge of $\Gamma$. We use a ``switch sequence'' to ensure this property is satisfied (Lemmas \ref{L:SwitchPathSmooth} and \ref{L:SwitchSequence}).

Finally, to ensure our representatives actually have the correct ideal Whitehead graphs, the representatives are constructed using paths in the lamination train track structures of \cite{p12b}. The paths correspond to Dehn twist automosphisms $x \mapsto xw$ and construct the lamination, as do the Dehn twists in \cite{p88}. In \cite{cp10} Clay and Pettet also use Dehn twists to construct fully irreducibles, but focus on subgroups generated by powers of two Dehn twists for two filling cyclic splittings. Further construction methods for fully irreducible outer automorphisms can be found in \cite{kl10} and \cite{h09}.

In \cite{p12b} we gave, for each $r \geq 3$, examples of connected, simplicial ($2r-1$)-vertex graphs that are not the ideal Whitehead graph of any fully irreducible outer automorphism $\phi \in Out(F_r)$. In \cite{p12d} we will finish our proof that precisely eighteen of the twenty-one connected, simplicial, five-vertex graphs are ideal Whitehead graphs of fully irreducible $\phi \in Out(F_3)$. The results of this paper are used for proving the theorem in \cite{p12d}, but also make progress in a second direction, as we prove existence of the complete graph in each rank instead of focusing exclusively on rank-three, as we will in \cite{p12d}.

\subsection*{Acknowledgements}

\indent The author would like to thank Lee Mosher, as always, for truly invaluable conversations, Arnaud Hilion for advice, and Martin Lustig for continued interest in her work. She would also like to thank Michael Handel for his recommendation on how to complete the ``Full Irreducibility Criterion'' proof. Finally, she would like to thank the referee for a very thoughtful and helpful referee report. She extends her gratitude to Bard College at Simon's Rock and the CRM for their hospitality.

\section{Preliminary definitions and notation}{\label{Ch:PrelimDfns}}

\emph{\textbf{We continue with the introduction's notation. Additionally, unless otherwise stated, we assume throughout this document that outer automorphism representatives are train track (tt) representatives in the sense of \cite{bh92}.}}

For a rank $r$ free group \emph{$F_r$}, $\mathcal{FI}_r$ will denote the set of the fully irreducible elements of $Out(F_r)$.

\vskip10pt

\noindent \textbf{Directions and turns}

\vskip1pt

In general we use definitions from \cite{bh92} and \cite{bfh00} for discussing train track maps. We remind the reader of additional definitions and notation given in \cite{p12b}. Here $\Gamma$ will be a rose and $g\colon\Gamma\to\Gamma$ will represent $\phi \in Out(F_r)$. $\mathcal{E}^+(\Gamma):= \{E_1, \dots, E_{n}\}= \{e_1, e_1, \dots, e_{2n-1}, e_{2n} \}$ will be the edge set of $\Gamma$ with a prescribed orientation. $\mathcal{E}(\Gamma)$:$=\{E_1, \overline{E_1}, \dots, E_n, \overline{E_n} \}$, where $\overline{E_i}$ denotes $E_i$ oppositely oriented. If an edge indexing $\{E_1, \dots, E_{n}\}$ (thus indexing $\{e_1, e_1, \dots, e_{2n-1}, e_{2n} \}$) is prescribed, $\Gamma$ is called \emph{edge-indexed}. $\mathcal{D}(v)$ or $\mathcal{D}(\Gamma)$ will denote the set of directions at the vertex $v$. For each $e \in \mathcal{E}(\Gamma)$, $D_0(e)$ will denote the initial direction of $e$. Also, $D_0 \gamma := D_0(e_1)$ for any path $\gamma=e_1 \dots e_k$ in $\Gamma$. \emph{$Dg$} will denote the direction map $g$ induces. We call $d \in \mathcal{D}(\Gamma)$ \emph{periodic} if $Dg^k(d)=d$ for some $k>0$ and \emph{fixed} if $k=1$.

$\mathcal{T}(v)$ will denote the set of turns at $v$ and $D^tg$ the induced turn map. Sometimes we abusively write $\{\overline{e_i}, e_j\}$ for $\{D_0(\overline{e_i}), D_0(e_j)\}$. For a path $\gamma=e_1e_2 \dots e_{k-1}e_k$ in $\Gamma$, we say $\gamma$ \emph{traverses} $\{\overline{e_i}, e_{i+1}\}$ for each $1 \leq i < k$. Recall that a turn is called \emph{illegal} for $g$ if $Dg^k(d_i)=Dg^k(d_j)$ for some $k$.

\vskip10pt

\noindent \textbf{Transition matrices and irreducibility.}

\vskip1pt

The \emph{transition matrix} for a topological representative $g$ is the square matrix where the $ij^{th}$ entry is the number of times $g(E_j)$ traverses $E_i$ in either direction. A matrix $A=[a_{ij}]$ is \emph{irreducible} if each entry $a_{ij} \geq 0$ and if, for each $i$ and $j$, there exists a $k>0$ so that the $ij^{th}$ entry of $A^k$ is strictly positive. The matrix is \emph{Perron-Frobenius (PF)} if each sufficiently high $k$ works for all index pairs $\{i, j \}$, in which case the map is called \emph{expanding}.

The Full Irreducibility Criterion will require that the transition matrix for a representative be PF. It will be relevant that any power of a PF matrix is both PF and irreducible. Additionally, a topological representative is irreducible if and only if its transition matrix is irreducible \cite{bh92}.

\vskip10pt

\noindent \textbf{Periodic Nielsen paths and ageometric outer automorphisms.}

\vskip1pt

Recall \cite{bh92}, for $g\colon\Gamma\to\Gamma$, a nontrivial path $\rho$ in $\Gamma$ is called a \emph{periodic Nielsen path (pNp)} if, for some $k$, $g^k(\rho)\simeq\rho$ rel endpoints. For $k=1$, $\rho$ is called a \emph{Nielsen path (Np)}. $\rho$ is called an \emph{indivisible Nielsen path (iNp)} if it cannot be written as a nontrivial concatenation $\rho=\rho_1 \cdot \rho_2$ of Np's $\rho_1$ and $\rho_2$. If $\rho$ is an iNp for an expanding irreducible train track map $g$, then (Lemma 3.4, \cite{bf94}) there exist unique, nontrivial, legal paths $\alpha$, $\beta$, and $\tau$ in $\Gamma$ so that $\rho=\bar{\alpha}\beta$, $g(\alpha)= \tau\alpha$, and $g(\beta)=\tau\beta$. In \cite{bf94}, immersed paths $\alpha_1, \dots, \alpha_k \in \Gamma$ are said to form an \emph{orbit of periodic Nielsen paths} if $g^k(\alpha_i) \simeq \alpha_{\text{i+1 mod k}}$ rel endpoints, for all $1 \leq i \leq k$. The orbit is called \emph{indivisible} when $\alpha_1$ is not a concatenation of subpaths belonging to orbits of pNps. Each $\alpha_i$ in an indivisible orbit is called an \emph{indivisible periodic Nielsen path} (\emph{ipNp}).

Recall \cite{gjll}, that an outer automorphism is \emph{ageometric} whose stable representative, in the sense of \cite{bh92}, has no pNp's. We denote by $\mathcal{AFI}_r$ the subset of $\mathcal{FI}_r$ consisting of the ageometric elements.

\vskip10pt

\noindent \textbf{Ideal Whitehead graphs and lamination train track (ltt) structures.}

\vskip1pt

We remind the reader of an \cite{hm11} ideal Whitehead graph definition and \cite{p12b} lamination train track structure definition. See \cite{p12a} and \cite{hm11} for descriptions of ideal Whitehead graph alternative definitions and outer automorphism invariance. Note that, while we use a representative to construct it, the ideal Whitehead graph does not depend on the choice of representative for an outer automorphism.

Let $\Gamma$ be a marked graph, $g\colon \Gamma \to \Gamma$ a tt representive of $\phi \in Out(F_r)$, and $v \in \Gamma$ a \emph{singularity} (the endpoint of an ipNp or a vertex with at least three periodic directions). The \emph{local Whitehead graph} $\mathcal{LW}(g; v)$ for $g$ at $v$ has:

(1) a vertex for each direction $d \in \mathcal{D}(v)$ and

(2) edges connecting vertices for $d_1, d_2 \in \mathcal{D}(v)$ when $\{d_1, d_2 \}$ is taken by some $g^k(e)$, with $e \in \mathcal{E}(\Gamma)$.

\noindent The \emph{local stable Whitehead graph} $\mathcal{SW}(g; v)$ is the subgraph obtained by restricting precisely to vertices with periodic direction labels. For a rose $\Gamma$ with vertex $v$, we denote the single local stable Whitehead graph $\mathcal{SW}(g; v)$ by $\mathcal{SW}(g)$ and the single local Whitehead graph $\mathcal{LW}(g; v)$ by $\mathcal{LW}(g)$.

For a pNp-free $g$, the \emph{ideal Whitehead graph $\mathcal{IW}(\phi)$ of $\phi$} is isomorphic to $\underset{\text{singularities v} \in \Gamma}{\bigsqcup} \mathcal{SW}(g;v)$. In particular, when $\Gamma$ is a rose, $\mathcal{IW}(\phi) \cong \mathcal{SW}(g)$.

Let $g$ be a pNp-free tt map on a marked rose $\Gamma$ with vertex $v$. Recall from \cite{p12b} the definition of the \emph{lamination train track (ltt) structure} $G(g)$ for $g$: The \emph{colored local Whitehead graph $\mathcal{CW}(g)$ at $v$} is $\mathcal{LW}(g)$ with the subgraph $\mathcal{SW}(g)$ colored purple and $\mathcal{LW}(g)- \mathcal{SW}(g)$ red (nonperiodic direction vertices are red). Let $\Gamma_N=\Gamma-N(v)$, where $N(v)$ is a contractible neighborhood of $v$. For each $E_i \in \mathcal{E}^+$, add vertices $D_0(E_i)$ and $\overline{D_0(E_i)}$ at the corresponding boundary points of the partial edge $E_i-(N(v) \cap E_i)$. $G(g)$ is formed from $\Gamma_N \bigsqcup \mathcal{CW}(g)$ by identifying the vertex $d_i$ in $\Gamma_N$ with the vertex $d_i$ in $\mathcal{CW}(g)$. Nonperiodic directions vertices are red, edges of $\Gamma_N$ are black, and periodic vertices are purple.

$G(g)$ is given a \emph{smooth structure} via a partition of the edges into the set of black edges $\mathcal{E}_b$ and the set of colored edges $\mathcal{E}_c$. A \emph{smooth path} will mean a path alternating between colored and black edges.

We refer the reader to \cite{p12a} or \cite{p12b} for a thorough presentation of abstract lamination train track structures. We summarize just several definitions here.

Recall that a \emph{train track (tt) graph} is a finite graph $G$ satisfying:
~\\
\vspace{-6mm}
\begin{description}
\item[tt1:] $G$ has no valence-1 vertices; \\[-6mm]
\item[tt2:] each edge of $G$ has 2 distinct vertices (single edges are never loops); and \\[-6mm]
\item[tt3:] the edge set of $G$ is partitioned into two subsets, $\mathcal{E}_b$ (the ``black'' edges) and $\mathcal{E}_c$ (the ``colored'' edges), such that each vertex is incident to at least one $E_b \in \mathcal{E}_b$ and at least one $E_c \in \mathcal{E}_c$. \\[-6mm]
\end{description}

A \emph{lamination train track (ltt) structure $G$} is a pair-labeled colored train track graph (black edges will be included, but not considered colored) satisfying:
~\\
\vspace{-6mm}
\begin{description}
\item[ltt1:] Vertices are either purple or red. \\[-6mm]
\item[ltt2:] Edges are of 3 types ($\mathcal{E}_b$ comprises the black edges and $\mathcal{E}_c$ comprises the red and purple edges): \\[-6mm]
~\\
\vspace{-\baselineskip}
\indent \indent {\begin{description}
\item[(Black Edges):] A single black edge connects each pair of (edge-pair)-labeled vertices.  There are no other black edges. In particular, each vertex is contained in a unique black edge. \\[-5.7mm]
\item[(Red Edges):] A colored edge is red if and only if at least one of its endpoint vertices is red. \\[-5.7mm]
\item[(Purple Edges):] A colored edge is purple if and only if both endpoint vertices are purple. \\[-6mm]
    \end{description}}
\item[ltt3:] No pair of vertices is connected by two distinct colored edges. \\[-6mm]
    \end{description}

We denote the purple subgraph of $G$ (from $\mathcal{SW}(g)$) by \emph{$\mathcal{PI}(G)$} and, if $\mathcal{G} \cong \mathcal{PI}(G)$, say $G$ \emph{is an ltt structure for $\mathcal{G}$}. An \emph{$(r;(\frac{3}{2}-r))$ ltt structure} is an ltt structure $G$ for a $\mathcal{G} \in \mathcal{PI}_{(r;(\frac{3}{2}-r))}$ such that:
~\\
\vspace{-\baselineskip}
\begin{description}
\item[ltt(*)4:] $G$ has precisely 2r-1 purple vertices, a unique red vertex, and a unique red edge. \\[-6mm]
\end{description}

We consider ltt structures \emph{equivalent} that differ by an ornamentation-preserving homeomorphism and refer the reader to the Standard Notation and Terminology 2.2 of \cite{p12b}. In particular, in abstract and nonabstract ltt structures, [$d_i, d_j$] is the edge connecting a vertex pair $\{d_i, d_j \}$, $[e_i]$ denotes the black edge [$d_i, \overline{d_i}$] for $e_i \in \mathcal{E}(\Gamma)$, and \emph{$\mathcal{C}(G)$} denotes the colored subgraph (from $\mathcal{LW}(g)$). Purple vertices are \emph{periodic} and red vertices \emph{nonperiodic}. $G$ is \emph{admissible} if birecurrent as a train track structure (i.e has a locally smoothly embedded line traversing each edge infinitely many times as $\mathbb{R}\to \infty$ and as $\mathbb{R}\to -\infty$).

For an $(r;(\frac{3}{2}-r))$ ltt structure $G$ for $\mathcal{G}$, additionally: \newline
\noindent \textbf{1.} $d^u$ labels the unique red vertex and is called the \emph{unachieved direction}. \newline
\noindent \textbf{2.} $e^R=[t^R]$ denotes the unique red edge, $\overline{d^a}$ labels its purple vertex, thus $t^R= \{d^u, \overline{d^a} \}$ ($e^R= [d^u, \overline{d^a}]$). \newline
\noindent \textbf{3.} $\overline{d^a}$ is contained in a unique black edge, which we call the \emph{twice-achieved edge}. \newline
\noindent \textbf{4.} $d^a$ will label the other twice-achieved edge vertex and be called the \emph{twice-achieved direction}. \newline
\noindent \textbf{5.} If $G$ has a subscript, the subscript carries over to all relevant notation. For example, in $G_k$, $d^u_k$ will label the red vertex and $e^R_k$ the red edge.

We call a $2r$-element set of the form $\{x_1, \overline{x_1}, \dots, x_r, \overline{x_r} \}$, with elements paired into \emph{edge pairs} $\{x_i, \overline{x_i}\}$, a \emph{rank}-$r$ \emph{edge pair labeling set} (we write $\overline{\overline{x_i}}=x_i$). We call a graph with vertices labeled by an edge pair labeling set a \emph{pair-labeled} graph, and an \emph{indexed pair-labeled} graph if an indexing is prescribed. A $\mathcal{G} \in \mathcal{PI}_{(r;(\frac{3}{2}-r))}$ is \emph{(index) pair-labeled} whose vertices are labeled by a $2r-1$ element subset of the rank-$r$ (indexed) edge pair labeling set. An ltt structure, index pair-labeled as a graph, is an \emph{indexed pair-labeled} ltt structure if the vertices of the black edges are indexed by edge pairs. Index pair-labeled ltt structures are \emph{equivalent} that are equivalent as ltt structures via an equivalence preserving the indexing of the vertex labeling set. By rank-$r$ index pair-labeling an $(r;(\frac{3}{2}-r))$ ltt structure $G$ and edge-indexing the edges of an $r$-petaled rose $\Gamma$, one creates an identification of the vertices in $G$ with $\mathcal{D}(v)$, where $v$ is the vertex of $\Gamma$. With this identification, we say $G$ is \emph{based} at $\Gamma$. In such a case we may use the notation $\{d_1, d_2, \dots, d_{2r-1}, d_{2r} \}$ for the vertex labels. Additionally, $[e_i]$ denotes $[D_0(e_i), D_0(\overline{e_i})] = [d_i, \overline{d_i}]$ for each edge $e_i \in \mathcal{E}(\Gamma)$. We call a permutation of the indices $1 \leq i \leq 2r$ combined with a permutation of the elements of each pair $\{x_i, \overline{x_i}\}$ an \emph{edge pair (EP) permutation}. Edge-indexed graphs will be considered \emph{edge pair permutation (EPP) isomorphic} if there is an EP permutation making the labelings identical.

\vskip10pt

\noindent \textbf{Ideal decompositions.}

\vskip1pt

M. Feighn and M. Handel defined rotationless train track representatives and outer automorphisms in \cite{fh11}. Recall \cite{hm11}: Let a $\phi \in \mathcal{AFI}_r$ be such that $\mathcal{IW}(\phi) \in \mathcal{PI}_{(r; (\frac{3}{2}-r))}$, then $\phi$ is \emph{rotationless} if and only if the vertices of $\mathcal{IW}(\phi) \in \mathcal{PI}_{(r; (\frac{3}{2}-r))}$ are fixed by the action of $\phi$.

The following is Proposition 3.3 of \cite{p12b}:

\begin{mainprop}{\label{P:IdealDecomposition}} \cite{p12b} Let $\phi \in \mathcal{AFI}_r$ with $\mathcal{IW}(\phi) \in \mathcal{PI}_{(r; (\frac{3}{2}-r))}$. There exists a pNp-free tt map on the rose  representing a rotationless power $\psi=\phi^R$ and decomposing as $\Gamma_0 \xrightarrow{g_1} \Gamma_1 \xrightarrow{g_2} \cdots \xrightarrow{g_{n-1}}
\Gamma_{n-1} \xrightarrow{g_n} \Gamma_n$, such that: \newline
\noindent (I) the index set $\{1, \dots, n \}$ is viewed as the set $\mathbf {Z}$/$n \mathbf {Z}$ with its natural cyclic ordering; \newline
\noindent (II) each $\Gamma_k$ is an edge-indexed rose with an indexing $\{e_{(k,1)}, e_{(k,2)}, \dots, e_{(k,2r-1)}, e_{(k,2r)}\}$ where:
~\\
\vspace{-\baselineskip}
{\begin{description}
\item (a) one can edge-index $\Gamma$ with $\mathcal{E}(\Gamma)=\{e_1, e_2, \dots, e_{2r-1}, e_{2r} \}$ such that, for each $t$ with $1 \leq t \leq 2r$, $g(e_t)=e_{i_1} \dots e_{i_s}$ where $(g_n \circ \cdots \circ g_1)(e_{0,t})=e_{n, i_1} \dots e_{n, i_s}$; \\[-5mm]
\item (b) for some $i_k,j_k$ with $e_{k,i_k} \neq (e_{k,j_k})^{\pm 1}$,
$$g_k(e_{k-1,t}):=
\begin{cases}
e_{k,i_k} e_{k,t} \text{ for $t=i_k$} \\
e_{k,t} \text{ for all $e_{k-1,t} \neq e_{k-1,j_k}^{\pm 1}$; and}
\end{cases}$$
\item (c) for each $e_t \in \mathcal{E}(\Gamma)$ such that $t \neq j_n$, we have $Dh(d_t)=d_t$, where $d_t=D_0(e_t)$.
\end{description}}
\end{mainprop}

Recall that tt maps satisfying (I)-(II) are called \emph{ideally decomposable ($\mathcal{ID}$)} with an \emph{ideal decomposition ($\mathcal{ID}$)}. An $\mathcal{ID}$ $g$ such that $\phi \in \mathcal{AFI}_r$ and $\mathcal{IW}(\phi) \in \mathcal{PI}_{(r;(\frac{3}{2}-r))}$ has \emph{type $(r;(\frac{3}{2}-r))$}. In \cite{p12b} we proved for a $(r;(\frac{3}{2}-r))$ tt map $g:\Gamma \to \Gamma$, that $G(g)$ is an $(r;(\frac{3}{2}-r))$ ltt structure with base $\Gamma$.

Again we denote $e_{k-1,j_k}$ by $e^{pu}_{k-1}$, denote $e_{k,j_k}$ by $e^u_k$, denote $e_{k,i_k}$ by $e^a_k$, and denote $e_{k-1,i_{k-1}}$ by $e^{pa}_{k-1}$. Also,$\mathcal{D}_k := \mathcal{D}(\Gamma_k)$, $\mathcal{E}_k:=\mathcal{E}(\Gamma_k)$, and $G_k:=G(f_k)$ where $f_k:= g_k \circ \cdots \circ g_1 \circ g_n \circ  \cdots \circ g_{k+1}: \Gamma_k \to \Gamma_k$. And
$$g_{k,i}:=
\begin{cases}
g_k \circ \cdots \circ g_i\colon \Gamma_{i-1} \to \Gamma_k \text{ if $k>i$}\text{ and } \\
 g_k \circ \cdots \circ g_1 \circ g_n \circ  \cdots \circ g_i \text{ if $k<i$}
\end{cases}.$$
It is proved in \cite{p12b} that $D_0(e^u_k)=d^u_k$, $D_0(e^a_k)=d^a_k$, $D_0(e^{pu}_{k-1})=d^{pu}_{k-1}$, and $D_0(e^{pa}_{k-1})=d^{pa}_{k-1}$. As described in \cite{p12b}, for any $k,l$, we have a \emph{direction map} $Dg_{k,l}$, an \emph{induced map of turns} $Dg_{k,l}^t$, and an \emph{induced map of ltt structures} $Dg_{k,l}^T: G_{l-1} \mapsto G_k$. $Dg_{k,l}^C$ denotes the restriction to $\mathcal{C}(G_{l-1})$ of $Dg_{k,l}^T$.

\vskip10pt

\noindent \textbf{Extensions and switches.}

\vskip1pt

By a proper full fold we mean the identification of a (proper) partial edge with a full edge. A \emph{triple} $(g_k, G_{k-1}, G_k)$ is an ordered set of three objects where $g_k: \Gamma_{k-1} \to \Gamma_k$ is a proper full fold of roses and, for $i=k-1,k$, $G_i$ is an ltt structure with base $\Gamma_i$. Recall \cite{p12b}, in an $\mathcal{ID}$ of a $(r;(\frac{3}{2}-r))$ representative, each $(g_k, G_{k-1}, G_k)$ satisfies the ``admissible map properties'' $\mathcal{AM}$I-VII of \cite{p12b} and is either a ``switch'' or ``extension.''

A \emph{generating triple (gt)} is a triple $(g_k, G_{k-1}, G_k)$ where
~\\
\vspace{-\baselineskip}
\begin{description}
\item [(gtI)] $g_k: \Gamma_{k-1} \to \Gamma_k$ is a proper full fold of edge-indexed roses defined by
\begin{itemize}
\item [a.] $g_k(e_{k-1,j_k})= e_{k,i_k} e_{k,j_k}$  where $d^a_k=D_0(e_{k,i_k})$, $d^u_k=D_0(e_{k,j_k})$, and $e_{k,i_k} \neq (e_{k,j_k})^{\pm 1}$ and \\[-5mm]
\item [b.] $g_k(e_{k-1,t})= e_{k,t}$ for all $e_{k-1,t} \neq (e_{k,j_k})^{\pm 1}$;
\end{itemize}
\item [(gtII)] $G_i$ is an indexed pair-labeled $(r;(\frac{3}{2}-r))$ ltt structure with base $\Gamma_i$ for $i=k-1,k$; and
\item [(gtIII)] The induced map of based ltt structures $D^T(g_k): G_{k-1} \to G_k$ exists and, in particular, restricts to an isomorphism from $\mathcal{PI}(G_{k-1})$ to $\mathcal{PI}(G_k)$.
\end{description}

\vskip10pt

\noindent $G_{k-1}$ is the \emph{source ltt structure} and $G_k$ the \emph{destination ltt structure}. If both are admissible, the triple is \emph{admissible}. We sometimes write $g_k: e^{pu}_{k-1} \mapsto e^a_k e^u_k$ for $g_k$, write $d^{pu}_{k-1}$ for $d_{k-1,j_k}$, and write $e^{pa}_{k-1}$ for $e_{k-1,i_k}$. If $G_k$ and $G_{k-1}$ are index pair-labeled $(r;(\frac{3}{2}-r))$ ltt structures for $\mathcal{G}$, then $(g_k, G_{k-1}, G_k)$ will be a \emph{generating triple for $\mathcal{G}$}.

The \emph{switch} determined by a purple edge $[d^a_k, d_{(k,l)}]$ in $G_k$ is the gt $(g_k, G_{k-1}, G_k)$ for $\mathcal{G}$ satisfying:
~\\
\vspace{-\baselineskip}
\indent \begin{description}
\item[(swI):] $D^T(g_k)$ restricts to an isomorphism from $\mathcal{PI}(G_{k-1})$ to $\mathcal{PI}(G_k)$ defined by $d^{pu}_{k-1} \mapsto d^a_k=d_{k, i_k}$
    ($d_{k-1,t} \mapsto d_{k,t}$ for $d_{k-1,t} \neq d^{pu}_{k-1}$) and extended linearly over edges. \\[-5mm]
\item[(swII):] $d^{pa}_{k-1} = d^u_{k-1}$. \\[-5mm]
\item[(swIII):] $\overline{d^a_{k-1}} = d_{k-1,l}$.
\end{description}
~\\
\vspace{-13mm}
\begin{figure}[H]
\centering
\includegraphics[width=4.6in]{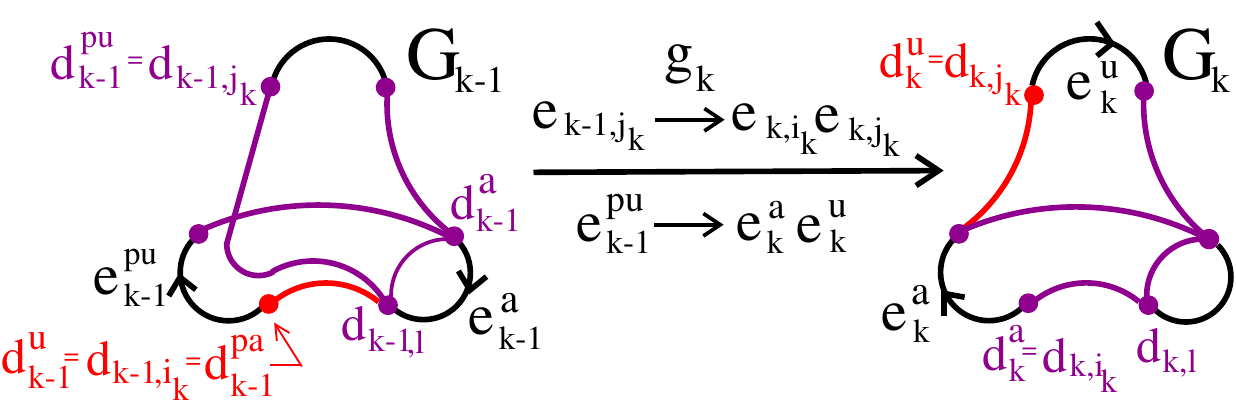}
\label{fig:SwitchDiagram} \\[-2mm]
\end{figure}

The \emph{extension determined by} $[d^a_k, d_{k,l}]$, is the gt $(g_k, G_{k-1}, G_k)$ for $\mathcal{G}$ satisfying:
~\\
\vspace{-\baselineskip}
\begin{description}
\item[(extI):] The restriction of $D^T(g_k)$ to $\mathcal{PI}(G_{k-1})$ is defined by sending, for each $j$, the vertex labeled $d_{k-1,j}$ to the vertex labeled $d_{k,j}$ and extending linearly over edges. \\[-5mm]
\item[(extII):] $d^u_{k-1}= d^{pu}_{k-1}$, i.e. $d^{pu}_{k-1}= d_{k-1,j_k}$ labels the single red vertex in $G_{k-1}$. \\[-5mm]
\item[(extIII):] $\overline{d^a_{k-1}}= d_{k-1,l}$.
\end{description}
~\\
\vspace{-13mm}
\begin{figure}[H]
\centering
\includegraphics[width=4.6in]{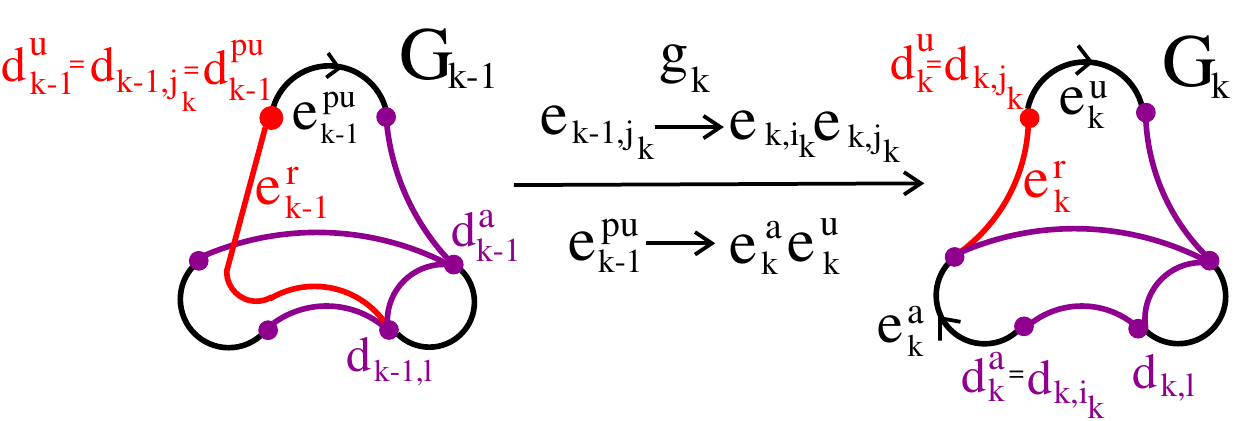}
\label{fig:ExtensionDiagram} \\[-3mm]
\end{figure}

\section{Compositions of extensions and switches}{\label{Ch:ConstructionCompositionsandSwitchPaths}}

Compositions of a sequence of extensions or a sequence of switches play an important role in our proofs.

\begin{df}{\label{D:AdmissibleCompositions}} A \emph{preadmissible composition} $(g_{i-k}, \dots, g_i, G_{i-k-1}, \dots, G_i)$ for a $\mathcal{G} \in \mathcal{PI}_{(r; (\frac{3}{2}-r))}$ is a sequence of proper full folds of (edge-pair)-indexed roses,
~\\
\vspace{-1.5mm}
$$\Gamma_{i-k-1} \xrightarrow{g_{i-k}} \Gamma_{i-k}
 \cdots \xrightarrow{g_{i-1}}\Gamma_{i-1} \xrightarrow{g_i} \Gamma_i,$$
with \emph{associated sequence of $(r;(\frac{3}{2}-r))$ ltt structures} for $\mathcal{G}$,
~\\
\vspace{-1mm}
$$G_{i-k-1} \xrightarrow{D^T(g_{i-k})} G_{i-k} \xrightarrow{D^T(g_{i-k+1})} \cdots \xrightarrow{D^T(g_{i-1})} G_{i-1} \xrightarrow{D^T(g_i)} G_i,$$
where, for each $i-k-1 \leq j < i$, $(g_{j+1}, G_j, G_{j+1})$ is an extension or switch for $\mathcal{G}$.
\end{df}

The Definition \ref{D:AdmissibleCompositions} notation will be standard. A composition is \emph{admissible} if each $G_j$ is. We call $g_{i,i-k}$ the \emph{associated automorphism}, $G_{i-k-1}$ the \emph{source ltt structure}, and $G_k$ the \emph{destination ltt structure}.

To ensure $\mathcal{IW}(g) \cong C_r$ in Theorem \ref{T:MainTheorem}, we use ``building block'' compositions of extensions: If each $(g_j, G_{j-1}, G_j)$ with $i-k < j \leq i$ is an admissible extension and $(g_{i-k}, G_{i-k-1}, G_{i-k})$ is an admissible switch, then we call $(g_{i-k}, \dots, g_i; G_{i-k-1}, \dots, G_i)$ an \emph{admissible construction composition} for $\mathcal{G}$. We call $g_{i,i-k}$ a \emph{construction automorphism}. Leaving out the switch, gives a \emph{purified construction automorphism} $g_p=g_i \circ \dots \circ g_{i-k+1}$ and \emph{purified construction composition} $(g_{i-k+1}, \dots, g_i; G_{i-k}, \dots, G_i)$.

A construction automorphism always has the form of a Dehn twist automorphism $e^{pu}_{i-k-1} \mapsto w e^u_{i-k}$, where $w=e^a_{i-k} \dots e^a_i$. One can view the composition as twisting the edge corresponding to $e^{pu}_{i-k-1}$ around the path corresponding to $w$ in the destination ltt structure. In the next section we describe these paths and prove (Proposition \ref{P:PathConstruction}) they ``construct'' a smooth path in their destination ltt structure.

\subsection{Construction Paths}{\label{S:ConstructionCompositions}}

\noindent Corresponding to a construction composition is a path in its destination ltt structure. A key property of such a path (Lemma \ref{P:PathConstruction}) holds when the construction composition is part of the ideal decomposition of a type $(r;(\frac{3}{2}-r))$ representative $g$: the image of the path's purple edges live in $G(g)$.

We abuse notation throughout this section by dropping indices. While not necessary, it may aid in visualization of the properties and procedures, as well as reduce potential confusion over indices.

\begin{lem}{\label{L:ConstructionPathSmooth}} Let $(g_1, \dots, g_n, G_0, \dots, G_n)$ be an $\mathcal{ID}$ for a $\mathcal{G} \in \mathcal{PI}_{(r; (\frac{3}{2}-r))}$ and $(g_{i-k}, \dots, g_i; G_{i-k-1}, \dots, G_i)$ a construction composition. Then
~\\
\vspace{-2mm}
$$[d^u_i, \overline{d^a_i}, d^a_i, d_i, d^a_{i-1}, d_{i-1}, \dots, d^a_{i-k+1}, d_{i-k+1}, d^a_{i-k}] =[d^u_i, \overline{d^a_i}, d^a_i, \overline{d^a_{i-1}}, \dots, \overline{d^a_{i-k}}, d^a_{i-k}]$$
is a smooth path in the ltt structure $G_i$.
\end{lem}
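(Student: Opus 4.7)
The plan is to verify that the stated vertex sequence defines a path in $G_i$ whose edges alternate between colored and black, and that every edge in the sequence actually exists. First I would check the equality of the two displayed forms. For each $j \in \{i-k+1, \ldots, i\}$, property (extIII) of the extension $(g_j, G_{j-1}, G_j)$ in the construction composition states $\overline{d^a_{j-1}} = d_{j-1, l}$ for some index $l$; the indexing preservation built into the ideal decomposition identifies $d_{j-1, l}$ with the corresponding direction in $\Gamma_j$ (and hence in $G_i$), which is precisely the label $d_j$ appearing between $d^a_j$ and $d^a_{j-1}$ in the first form. Hence the two forms describe the same sequence of vertices.

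Next I would classify the $2k+2$ consecutive-vertex edges. Each pair $\{\overline{d^a_j}, d^a_j\}$ is an edge pair, so by ltt2 (Black Edges) its endpoints are joined by the unique black edge $[e^a_j]$ of $G_i$. The pair $\{d^u_i, \overline{d^a_i}\}$ consists of the unique red vertex and the purple vertex $\overline{d^a_i}$, and by the definition of an $(r;(\frac{3}{2}-r))$ ltt structure these are joined by the unique red edge $e^R_i$. The remaining $k$ edges $[d^a_j, \overline{d^a_{j-1}}]$ for $j \in \{i, i-1, \ldots, i-k+1\}$ each have two purple endpoints, so if they exist they are purple edges of $G_i$. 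Granting their existence, the edges of the path alternate in order red, black, purple, black, $\ldots$, purple, black; this is alternating colored-black, hence smooth.

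The main content is therefore the existence of each purple edge $[d^a_j, \overline{d^a_{j-1}}]$ in $G_i$. By definition of the construction composition, $(g_j, G_{j-1}, G_j)$ for $j \in \{i-k+1, \ldots, i\}$ is an admissible extension, and hence is the extension determined by some edge $[d^a_j, d_{j, l}]$ of $G_j$; applying (extIII) together with the indexing identification, $d_{j, l}$ coincides with $\overline{d^a_{j-1}}$, so $[d^a_j, \overline{d^a_{j-1}}]$ is already a purple edge of $G_j$. To lift this edge from $G_j$ to $G_i$, I would iterate the purple-subgraph isomorphisms $D^T(g_l) \colon \mathcal{PI}(G_{l-1}) \to \mathcal{PI}(G_l)$ for $l = j+1, j+2, \ldots, i$, each of which, by (extI), acts on vertices by $d_{l-1, t} \mapsto d_{l, t}$ and so preserves all indexed labels; hence the purple edge persists to $G_i$. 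The main obstacle is the bookkeeping: tracking the identification of directions $d_{m, t}$ across the chain of $\Gamma_m$'s via their shared index, and noticing that the bottommost triple $(g_{i-k}, G_{i-k-1}, G_{i-k})$ is a \emph{switch} rather than an extension, which is why the path terminates at $d^a_{i-k}$ rather than continuing to a purple edge $[d^a_{i-k}, \overline{d^a_{i-k-1}}]$.
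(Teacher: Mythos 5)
Your proposal is correct and follows essentially the same route as the paper's proof: exhibit each purple edge $[d^a_j, \overline{d^a_{j-1}}]$ in an intermediate structure and propagate it to $G_i$ via the second-index-preserving isomorphisms $\mathcal{PI}(G_{l-1}) \to \mathcal{PI}(G_l)$ of the extensions, with smoothness coming from the colored/black alternation and with the observation that the bottom switch plays no role. The only cosmetic difference is that you obtain the purple edge in $G_j$ directly from the determining-edge data via (extIII), whereas the paper derives the same fact by pushing the red edge $e^R_{j-1} = [d^u_{j-1}, \overline{d^a_{j-1}}]$ forward under $D^Cg_j$, citing Corollary 5.6b and Lemma 5.7 of \cite{p12b}.
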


\begin{proof} We proceed by induction for decreasing $s$. Proof by induction is valid, as the proof does not rely on $G_{i-k-1}$ (the only thing distinguishing $(g_{i-k}, G_{i-k-1}, G_{i-k})$ as a switch). For the base case note that $e^R_i=[d^u_i, \overline{d^a_i}]$. So $[d^u_i, \overline{d^a_i}, d^a_i]$ is a path in $G_i$ and smooth, as it alternates between colored and black edges ($[d^u_i, \overline{d^a_i}]$ is colored and $[\overline{d^a_i}, d^a_i]$ is black). For the induction assume, for $i > s > i-k$, $[d^u_i, \overline{d^a_i}, d^a_i, \overline{d^a_{i-1}}, d^a_{i-1}, \dots, d^a_{s+1}, \overline{d^a_s}, d^a_s]$ is a smooth path in $G_i$ (ending with the black edge $[\overline{d^a_s}, d^a_s]$).

\indent By \cite{p12b} Corollary 5.6b, $e^R_{s-1}=[d^u_{s-1}, \overline{d^a_{s-1}}]$. By \cite{p12b} Lemma 5.7, $D^Cg_s([d^u_{s-1}, \overline{d^a_{s-1}}])$ \newline
\noindent $=[d^a_s, \overline{d^a_{s-1}}]$ is a purple edge in $G_s$. Since purple edges are always mapped to themselves by extensions (in the sense that $D^C$ preserves the second index of their vertex labels) and $D^Cg_{s}([d^u_{s}, \overline{d^a_{s-1}}])=[d^a_{s}, \overline{d^a_{s-1}}]$ is in $\mathcal{PI}(G_s)$, $D^Cg_{n,s}(\{d^u_{s-1}, \overline{d^a_{s-1}} \}) = D^Cg_{n,s+1}(D^Cg_{s}([d^u_{s-1}, \overline{d^a_{s-1}}]))= D^Cg_{n,s+1}([d^a_s, \overline{d^a_{s-1}}])=[d^a_s, \overline{d^a_{s-1}}]$ is in $\mathcal{PI}(G_i)$. Thus, including the purple edge $[d^a_s, \overline{d^a_{s-1}}]$ in the smooth path $[d^u_i, \overline{d^a_i}, d^a_i, \overline{d^a_{i-1}}, d^a_{i-1}, \dots, d^a_{s+1}, \overline{d^a_s}, d^a_s]$ gives the smooth path $[d^u_i, \overline{d^a_i}, d^a_i, \overline{d^a_{i-1}}, \dots, d^a_{s+1}, \overline{d^a_s}, d^a_s, \overline{d^a_{s-1}}]$. (It is smooth, as we added a colored edge to a path with edges alternating between colored and black, ending with black). By including the black edge $[\overline{d^a_{s-1}}, d^a_{s-1}]$ we get the construction path $[d^u_i, \overline{d^a_i}, d^a_i, \overline{d^a_{i-1}}, d^a_{i-1}, \dots, d^a_s, \overline{d^a_{s-1}}, d^a_{s-1}]$. (Also smooth, as we added a black edge to a path with edges alternating between colored and black, ending colored). This concludes the inductive step, hence proof. \qedhere
\end{proof}

The path of Lemma \ref{L:ConstructionPathSmooth} (depicted in Example \ref{E:ConstructionAutomorphism}) is called the \emph{construction path} for $(g_{i-k}, \dots,g_i;$ \newline
\noindent $G_{i-k-1}, \dots, G_i)$ and denoted $\gamma_{g_{i,i-k}}$. One obtains it by traversing the red edge $[d^u_i, \overline{d^a_i}]$ from the red vertex $d^u_i$ to the vertex $\bar d^a_i$, the black edge $[\overline{d^a_i}, d^a_i]$ from $\overline{d^a_i}$ to $d^a_i$, the extension determining purple edge $[d^a_i, d_i]=[d^a_i, \overline{d^a_{i-1}}]$ from $d^a_i$ to $d_i= \overline{d^a_{i-1}}$, the black edge $[\overline{d^a_{i-1}}, d^a_{i-1}]$ from $\overline{d^a_{i-1}}$ to $d^a_{i-1}$, the extension determining purple edge $[d^a_{i-1}, d_{i-1} ]= [d^a_{i-1}, \overline{d^a_{i-2}}]$ from $d^a_{i-1}$ to $d_{i-1}= \overline{d^a_{i-2}}$, the black edge $[\overline{d^a_{i-2}}, d^a_{i-2}]$ from $\overline{d^a_{i-2}}$ to $d^a_{i-2}$, continuing as such through the purple edges determining each $g_j$ (inserting black edges between), and finally traversing $[d^a_{i-k+1}, d_{i-k+1}]= [d^a_{i-k+1}, \overline{d^a_{i-k}}]$ and then $[\overline{d^a_{i-k}}, d^a_{i-k}]$ from $\overline{d^a_{i-k}}$ to $d^a_{i-k}$.

Let $G$ be an admissible $(r;(\frac{3}{2}-r))$ ltt structure with the standard notation. The \emph{construction subgraph} $G_C$ is constructed from $G$ via the following procedure:
\begin{enumerate}[itemsep=-1.5pt,parsep=3pt,topsep=3pt]
\item Remove the interior of the black edge $[e^u]$, the purple vertex $\overline{d^u}$, and the interior of any purple edges containing the vertex $\overline{d^u}$. Call the graph with these edges and vertices removed $G^1$.
\item Given $G^{j-1}$, recursively define $G^j$: Let $\{\alpha_{j-1,i} \}$ be the set of vertices in $G^{j-1}$ not contained in any colored edge of $G^{j-1}$. $G^j$ is obtained from $G^{j-1}$ by removing all black edges containing a vertex $\alpha_{j-1,i} \in \{\alpha_{j-1,i} \}$, as well as the interior of each purple edge containing a vertex $\overline{\alpha_{j-1,i}}$.
\item $G_C = \underset{j}{\cap} G^j$.
\end{enumerate}

A construction path actually always lives in the construction subgraph of its destination ltt structure.

\begin{ex}{\label{E:ConstructionSubgraph}} To find the construction subgraph $G_C$ for the ltt structure $G$ on the left (1), we remove the interior of the black edge [$\bar a, a$] to obtain the middle graph (2), then remove $a$ and the interior of all purple edges containing $a$ to obtain $G_C$ (graph (3) depicted on the right).
~\\
\vspace{-6.75mm}
\begin{figure}[H]
 \centering
 \noindent \includegraphics[width=4.5in]{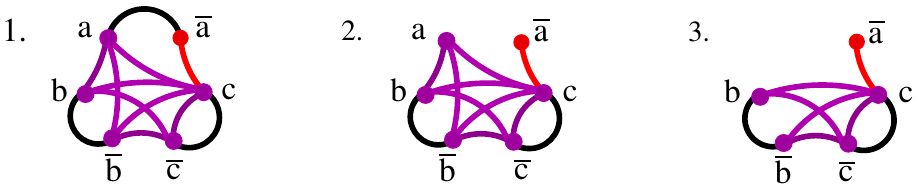}
 \label{fig:firstbuildingsubgraph1}
 \end{figure}
\end{ex}

The following lemma gives some conditions under which a path in an admissible (edge-pair)-indexed $(r;(\frac{3}{2}-r))$ ltt structure $G$ is guaranteed to be the construction path for a construction composition with destination ltt structure $G$. It also explains how to find such a construction composition.

\begin{lem}{\label{L:ConstructionAutomorsphismFromPath}} Let $G$ be an admissible $(r;(\frac{3}{2}-r))$ ltt structure and consider a smooth path
~\\
\vspace{-2mm}
$$\gamma = [d^u, \overline{x_1}, x_1, \overline{x_2}, x_2, \dots, x_{k+1}, \overline{x_{k+1}}]$$
in $G_C$ starting with $e^R$ (oriented from $d^u$ to $\overline{d^a}$) and ending with the black edge $[x_{k+1}, \overline{x_{k+1}}]$.

Edge-index $r$-petaled roses $\Gamma_{i-k-1}, \dots, \Gamma_i$ and define the homotopy equivalences
~\\
\vspace{-1mm}
$$\Gamma_{i-k-1} \xrightarrow{g_{i-k}} \Gamma_{i-k} \xrightarrow{g_{i-k+1}} \cdots \xrightarrow{g_{i-1}}\Gamma_{i-1} \xrightarrow{g_i} \Gamma_i$$
by $g_l: e_{l-1,s} \mapsto e_{l,t_l} e_{l,s}$, where $D_0(e_{l,t_l})= \overline{x_{i-l+1}}$, and $g_l(e_{l-1,j})=e_{l,j}$ for $e_{l-1,j} \neq e_{l-1,s}^{\pm 1}$.

Define the ltt structures (with respective bases $\Gamma_j$) $G_t$, for $i-k-1 \leq t \leq i$, by having: \newline
\noindent 1. each $\mathcal{PI}(G_l)$ isomorphic to $\mathcal{PI}(G_i)$ via an isomorphism preserving the vertex label second indices, \newline
\noindent 2. the second index of the label on the single red vertex in each $G_l$ be ``s'' (the same as in $G_i$), and \newline
\noindent 3. the single red edge in $G_l$ be $[d_{l,s}, \overline{d_{l,t_l}}]$.

If each $G_j$ is an admissible $(r;(\frac{3}{2}-r))$ ltt structure for $\mathcal{G}$ with base $\Gamma_j$, then $(g_{i-k}, \dots, g_i; G_{i-k-1}, \dots, G_i)$ is a purified construction composition with construction path $\gamma$. For each $i-k+1 \leq l \leq i$, the triple $(g_l, G_{l-1}, G_l)$ is the extension determined by $[\overline{x_{i-l+1}}, x_{i-l+2}]$.
\end{lem}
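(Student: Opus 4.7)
The plan is to verify three things in sequence: first, each triple $(g_l, G_{l-1}, G_l)$ is a well-defined generating triple; second, each such triple is precisely the extension determined by $[\overline{x_{i-l+1}}, x_{i-l+2}]$; and third, the construction path of the resulting purified composition, computed via Lemma \ref{L:ConstructionPathSmooth}, equals $\gamma$.

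For the first task, the formula defining $g_l$ already has the shape required by (gtI); the only nontrivial point is $e_{l,t_l} \ne e_{l,s}^{\pm 1}$. Since $\gamma$ is smooth and lies in $G_C$, it starts with the red edge $e^R$ from the unique red vertex $d^u = d_{i,s}$ to the purple vertex $\overline{x_1}$, forcing $\overline{x_1}$ and its edge-pair partner to be distinct from $d_{i,s}$; hypothesis 1 (second-index-preserving isomorphisms $\mathcal{PI}(G_l)\cong \mathcal{PI}(G_i)$) together with hypothesis 2 (red vertex has second index $s$ in every $G_l$) transports this to each $G_l$, so $t_l \ne s$. Axiom (gtII) is built into the hypothesis ``each $G_j$ is an admissible $(r;(\tfrac{3}{2}-r))$ ltt structure,'' and (gtIII) follows because hypothesis 1 already furnishes the required second-index-preserving isomorphism $\mathcal{PI}(G_{l-1})\to\mathcal{PI}(G_l)$; the induced map $D^T(g_l)$ agrees with it on purple vertices, since $g_l$ acts as the identity on every $e_{l-1,j}$ with $j\ne s$ and the remaining vertex $d_{l-1,s}$ is red.

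For the second task, I check (extI)--(extIII). Axiom (extI), that $D^T(g_l)$ sends $d_{l-1,j}\mapsto d_{l,j}$ on purple vertices, is exactly the isomorphism just used. Axiom (extII), $d^u_{l-1}=d^{pu}_{l-1}$, holds since hypothesis 2 gives $d^u_{l-1}=d_{l-1,s}$ while the fold formula for $g_l$ gives $d^{pu}_{l-1}=d_{l-1,s}$. Axiom (extIII) is a direct computation: $d^a_{l-1}=\overline{x_{i-(l-1)+1}}=\overline{x_{i-l+2}}$, hence $\overline{d^a_{l-1}}=x_{i-l+2}$, which by hypothesis 1 is the $G_{l-1}$-vertex carrying the same second index as the purple endpoint $x_{i-l+2}$ of the asserted determining edge $[\overline{x_{i-l+1}}, x_{i-l+2}]$ in $G_l$. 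Having shown every triple is an extension, $(g_{i-k},\dots,g_i;G_{i-k-1},\dots,G_i)$ is a purified construction composition.

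For the third task, apply Lemma \ref{L:ConstructionPathSmooth} to this composition: the construction path is
$$[d^u_i,\overline{d^a_i},d^a_i,\overline{d^a_{i-1}},d^a_{i-1},\dots,\overline{d^a_{i-k}},d^a_{i-k}].$$
Substituting $d^a_l=\overline{x_{i-l+1}}$ (and $\overline{d^a_l}=x_{i-l+1}$) term by term rewrites this vertex sequence exactly as $\gamma$. The main obstacle I anticipate is keeping three label schemes aligned---the ``path labels'' $x_j$ along $\gamma$, the second indices $j$ coordinating vertices across the $G_l$'s via hypothesis 1, and the standard notation $d^a_l, d^{pu}_{l-1}, d^u_{l-1}$---but once the correspondence $d^a_l\leftrightarrow\overline{x_{i-l+1}}$ is fixed, every verification above reduces to direct substitution together with the smoothness and location of $\gamma$ in $G_C$.
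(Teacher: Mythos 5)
Your proof is correct and follows essentially the same route as the paper: verify, using hypotheses (1)--(3) and the fold formulas, that each triple $(g_l, G_{l-1}, G_l)$ satisfies the extension axioms with determining edge $[\overline{x_{i-l+1}}, x_{i-l+2}]$, then invoke Lemma \ref{L:ConstructionPathSmooth} together with the identification $d^a_l = \overline{x_{i-l+1}}$ to equate the resulting construction path with $\gamma$. Your additional explicit checks (the (gtI) non-degeneracy $e_{l,t_l} \neq e_{l,s}^{\pm 1}$ via the construction subgraph $G_C$, and the term-by-term vertex matching) are finer-grained than the paper's brief verification but do not alter the argument.
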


\begin{proof} It suffices to show: A. each $(g_l, G_{l-1}, G_l)$ is the extension determined by $[\overline{x_{i-l+1}}, x_{i-l+2}]$ (so that $(g_{i-k}, \dots, g_i; G_{i-k-1}, \dots, G_i)$ is indeed a construction composition) and B. the corresponding construction path is $[d^u_i, \overline{d^a_i}, d^a_i, \overline{d^a_{i-1}}, d^a_{i-1}, \dots, d^a_{i-k+1}, \overline{d^a_{i-k}}, d^a_{i-k}]$.

(extI) holds by our requiring each $G_j$ be an $(r;(\frac{3}{2}-r))$ ltt structure with rose base graph. The $G_l$ are $(r;(\frac{3}{2}-r))$ ltt structures for $\mathcal{PI}(G)$ by (1)-(3) in the lemma statement. This, with how we defined our notation, implies (gtIII) and (extI). The second index of the red vertex label is the same in each $G_l$ as in $G_i$, giving (ext II). To see (extIII) holds by (1), note that $[\overline{x_{i-l+1}}, x_{i-l+2}]$ is in $\mathcal{PI}(G_l)$ (it is in $G$ and $\mathcal{PI}(G) \cong \mathcal{PI}(G_l)$) and would be the determining edge for the extension. (A) is proved.

The construction path is $[d^u_i, \overline{d^a_i}, d^a_i, \overline{d^a_{i-1}}, d^a_{i-1}, \dots, d^a_{i-k+1}, \overline{d^a_{i-k}}, d^a_{i-k}]$ by Lemma \ref{L:ConstructionPathSmooth}, proving (B). \qedhere
\end{proof}

It is proved in \cite{p12a} that $(g_{i-k}, \dots, g_i; G_{i-k-1}, \dots, G_i)$ is in fact the unique construction composition with $\gamma$ as its construction path. We call $\Gamma_{i-k-1} \xrightarrow{g_{i-k}} \Gamma_{i-k} \xrightarrow{g_{i-k+1}} \cdots \xrightarrow{g_{i-1}}\Gamma_{i-1} \xrightarrow{g_i} \Gamma_i$, together with its sequence of ltt structures $G_{i-k-1} \xrightarrow{D^T(g_{i-k})} G_{i-k} \xrightarrow{D^T(g_{i-k+1})} \cdots \xrightarrow{D^T(g_{i-1})} G_{i-1} \xrightarrow{D^T(g_i)} G_i$, as in the lemma, the construction composition \emph{determined by} the path $\gamma = [d^u, \overline{x_1}, x_1, \overline{x_2}, x_2, \dots, x_{k+1}, \overline{x_{k+1}}]$.

\begin{ex}{\label{E:ConstructionAutomorphism}} In the following ltt structure, $G_i$, the numbered edges give a construction path determined by the construction automorphism $a \mapsto ab\bar{c}\bar{c}bbcb$ (the automorphism fixes all other edges).
~\\
\vspace{-6.5mm}
\begin{figure}[H]
\centering
\includegraphics[width=1.5in]{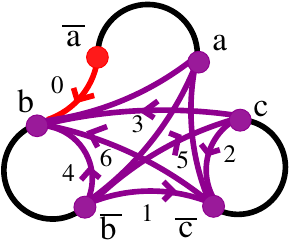}
~\\
\vspace{-4mm}
\label{fig:ConstructionPathExample}
\end{figure}

We retrieve each ltt structure $G_{i-k}$ in the construction composition by moving the red edge of $G_i$ to be attached to the terminal vertex of edge $k$ in the construction path. If the red vertex of $G_j$ is $d_s$ and the red edge is $[d_s,d_t]$, then $g_j$ is defined by $e_s \mapsto \bar{e_t}e_s$. We show the construction composition, leaving out the source ltt structure $G_{i-7}$ of the switch to highlight that it does not affect the construction path.
~\\
\vspace{-9.25mm}
\noindent \begin{figure}[H]
\centering
\noindent \includegraphics[width=6.5in]{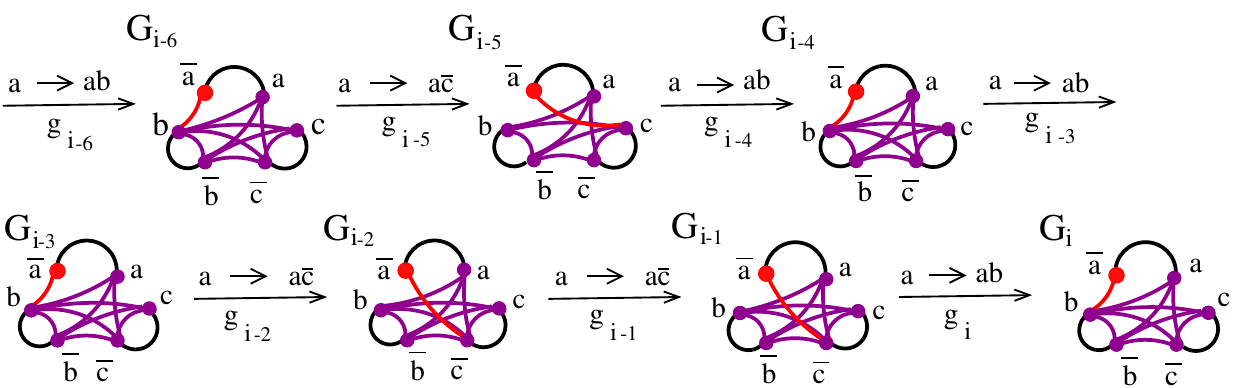}
\label{fig:ConstructionCompositionSequence}
\end{figure}
\end{ex}

\indent Lemma \ref{P:PathConstruction} is fundamental to our construction techniques. It says that construction compositions, in fact, ``build'' in the ideal Whitehead graph the images of the purple edges of the construction path:

\begin{lem}{\label{P:PathConstruction}}  Let $g$ be an $\mathcal{ID}$ type $(r; (\frac{3}{2}-r))$ representative of $\phi \in Out(F_r)$ with $\mathcal{IW}(\phi)=\mathcal{G}$. Suppose $g$ decomposes as $\Gamma = \Gamma_0 \xrightarrow{g_1} \Gamma_1 \xrightarrow{g_2} \cdots \xrightarrow{g_{n-1}}\Gamma_{n-1} \xrightarrow{g_n} \Gamma_n = \Gamma$,
with the sequence of ltt structures for $\mathcal{G}$:
$$G_{i-k-1} \xrightarrow{D^T(g_{i-k})} G_{i-k} \xrightarrow{D^T(g_{i-k+1})} \cdots \xrightarrow{D^T(g_{i-1})} G_{i-1} \xrightarrow{D^T(g_i)} G_i.$$
If $g'=g_n \circ \dots \circ g_{k+1}$ is a construction composition, then $\mathcal{G}$ contains as a subgraph the purple edges in the construction path for $g'$. \end{lem}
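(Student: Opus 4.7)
The plan is to reduce the statement to Lemma \ref{L:ConstructionPathSmooth} combined with the defining identification $\mathcal{PI}(G_n) \cong \mathcal{G}$ built into the hypothesis that each $G_j$ is an $(r;(\frac{3}{2}-r))$ ltt structure \emph{for $\mathcal{G}$}. The heavy lifting has really already been done in Lemma \ref{L:ConstructionPathSmooth} — its inductive step pulls each successive red edge $e^R_{s-1}$ forward via $D^Cg$ to land as a purple edge of $G_n$ — so the remaining content of Lemma \ref{P:PathConstruction} is essentially translating ``purple edge of $G_n$'' into ``edge of $\mathcal{G}$''.

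First I would apply Lemma \ref{L:ConstructionPathSmooth} to the construction composition $(g_{k+1}, \dots, g_n;\, G_k, \dots, G_n)$. It asserts that the construction path
\[
\gamma_{g'} = [d^u_n,\, \overline{d^a_n},\, d^a_n,\, \overline{d^a_{n-1}},\, d^a_{n-1},\, \dots,\, \overline{d^a_{k+1}},\, d^a_{k+1}]
\]
is a smooth path in $G_n$, whose colored edges are the single red edge $e^R_n = [d^u_n, \overline{d^a_n}]$ together with the purple edges $[d^a_j, \overline{d^a_{j-1}}]$ for $k+2 \le j \le n$. The conclusion of that lemma (specifically the $D^Cg_{n,s+1}([d^a_s,\overline{d^a_{s-1}}])=[d^a_s,\overline{d^a_{s-1}}]$ step) places each such purple edge in $\mathcal{PI}(G_n)$.

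Next I would invoke the definition of $G_n$. Since $g$ is an $\mathcal{ID}$ representative, $G_n = G(f_n) = G(g)$, so by construction of the colored local Whitehead graph $\mathcal{PI}(G_n) = \mathcal{SW}(g)$. Because $g$ is a pNp-free tt representative of $\phi$ on a rose, $\mathcal{SW}(g) \cong \mathcal{IW}(\phi) = \mathcal{G}$, and the hypothesis that $G_n$ is an $(r;(\frac{3}{2}-r))$ ltt structure \emph{for $\mathcal{G}$} guarantees that this isomorphism is label-preserving on vertices (so edges really correspond to edges). Therefore each purple edge $[d^a_j, \overline{d^a_{j-1}}]$ of $\gamma_{g'}$ corresponds to an edge of $\mathcal{G}$, and the collection of purple edges of $\gamma_{g'}$ sits as a subgraph of $\mathcal{G}$, as claimed.

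The only point that requires care — and it is more bookkeeping than an obstacle — is verifying the compatibility of labelings: that the labels $d^a_j,\overline{d^a_{j-1}}$ used to pick out edges in $\mathcal{PI}(G_n)$ are the same labels under which the isomorphism $\mathcal{PI}(G_n)\cong\mathcal{G}$ is realized. This is immediate from the definition of an ``$(r;(\frac{3}{2}-r))$ ltt structure for $\mathcal{G}$'' in Section \ref{Ch:PrelimDfns}, where $\mathcal{PI}(G)$ is identified with $\mathcal{G}$ via the prescribed edge-pair labeling. No new estimates or case analyses are needed beyond this unpacking.
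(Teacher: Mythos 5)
Your proposal is correct and follows essentially the same route as the paper: the paper's own proof of this lemma simply re-runs the induction underlying Lemma \ref{L:ConstructionPathSmooth} (the red edge $[d^u_{s-1},\overline{d^a_{s-1}}]$ maps under $D^Cg_s$ to the purple edge $[d^a_s,\overline{d^a_{s-1}}]$, which extensions then carry unchanged into $\mathcal{PI}(G_n)$), which is exactly the step you cite, and the identification $\mathcal{PI}(G_n)=\mathcal{SW}(g)\cong\mathcal{IW}(\phi)=\mathcal{G}$ that you spell out is left implicit there. The only caveat is that you lean on a step inside the proof of Lemma \ref{L:ConstructionPathSmooth} rather than its bare statement (which asserts only smoothness of the path in $G_n$), but since the paper reproduces that computation verbatim in its own proof, this is a presentational rather than a mathematical difference.
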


\begin{proof} We proceed by induction for decreasing $k$. Proof by induction is valid here since nothing in the proof will rely on $G_k$ (the only thing distinguishing $(g_k, G_k, G_{k+1})$ as a switch instead of an extension).

\indent For the base case consider $g_n \circ g_{n-1}$. By \cite{p12b} Corollary 5.6b $G_{n-1}$ has red edge $[d^u_{n-1}, \overline{d^a_{n-1}}]$. We know $g_n$ is defined by $g_n$: $e^{pu}_{n-1} \mapsto e^a_n e^u_n$ and $g_n(e_{n-1,l})= e_{n,l}$ for all $e_{n-1,l} \neq (e^{pu}_{n-1})^{\pm 1}$. Thus, since $d^{pu}_{n-1}= d^u_{n-1} \neq \overline{d^a_{n-1}}$, we know that $Dg_n(\overline{d^a_{n-1}}) = \overline{d^a_{n-1}}$. So $D^Cg_n([d^u_{n-1}, \overline{d^a_{n-1}}]) = D^Cg_n([d^{pu}_{n-1}, \overline{d^a_{n-1}}]) = [d^a_n, \overline{d^a_{n-1}}]$ and, since $D^Cg_n(\mathcal{C}(G_{n-1})) \subset \mathcal{PI}(G_n)$, $[d^a_n, \overline{d^a_{n-1}}]$ is in $\mathcal{PI}(G_n)$. The base case is proved.

\indent For the inductive step assume, for $n > s > k+1$, $G_n$ contains the purple edges of $\gamma_{g_{n,s}}$. Again by \cite{p12b} Corollary 5.6b, $e^R_{s-1}=[d^u_{s-1}, \overline{d^a_{s-1}}]$. As above, $D^Cg_s([d^u_{s-1}, \overline{d^a_{s-1}}])= [d^a_s, \overline{d^a_{s-1}} ]$ is in $\mathcal{PI}(G_s)$. Since extensions map purple edges to themselves and $D^Cg_{s}([d^u_{s}, \overline{d^a_{s-1}}])=[d^a_{s}, \overline{d^a_{s-1}}]$, $D^Cg_{n,s}([d^u_{s-1}, \overline{d^a_{s-1}}]) = D^Cg_{n,s+1}(D^tg_{s}([d^u_{s-1}, \overline{d^a_{s-1}}])) = D^Cg_{n,s+1}([d^a_s, \overline{d^a_{s-1}}])=[d^a_s, \overline{d^a_{s-1}}]$, proving the inductive step. \qedhere
\end{proof}

\subsection{Switch Paths}

We use ``switch paths'' to find switch sequences. Here switch sequences play two primary roles: ensuring our ideal decomposition actually gives a loop in $\mathcal{ID}(\mathcal{G})$ and ensuring our transition matrix is PF.

\emph{We continue with the notational abuse of the previous section (primarily ignoring second indices).}

\vskip8pt

\begin{df} (See Example \ref{E:SwitchPath}) An admissible \emph{switch sequence} for a $(r;(\frac{3}{2}-r))$ graph $\mathcal{G}$ is an admissible composition $(g_{i-k}, \dots, g_i; G_{i-k-1}, \dots, G_i)$ for $\mathcal{G}$ such that
\indent{\begin{description}
\item  [(ss1)] each  $(g_j, G_{j-1}, G_j)$ with $i-k \leq j \leq i$ is a switch and
\item  [(ss2)] $d^a_{n+1}=d^u_n \neq d^u_l=d^a_{l+1}$ and $\overline{d^a_l} \neq
d^u_n=d^a_{n+1}$ for all $i \geq n > l \geq i-k$.
\end{description}}
\indent We call the associated automorphism $g_{i,i-k}=g_i \circ \dots \circ g_{i-k}$ a \emph{switch sequence automorphism}.
\end{df}

\begin{rk} (ss2) is not implied by (ss1) and is necessary for a switch path to indeed be a path.  Certain statements in the Lemma \ref{L:SwitchPathSmooth} proof below (showing that the switch path for a switch sequence is a smooth path in the destination ltt structure) would be incorrect without (ss2).
\end{rk}

\begin{df} Let $(g_j, \dots, g_k; G_{j-1}, \dots, G_k)$ be an admissible switch sequence. Its \emph{switch path} is a path in the destination ltt structure $G_k$ traversing the red edge [$d^u_k, \overline{d^a_k}$] from its red vertex $d^u_k$ to $\overline{d^a_k}$, the black edge [$\overline{d^a_k}, d^a_k$] from $\overline{d^a_k}$ to $d^a_k$, what is the red edge [$d^u_{k-1}, \overline{d^a_{k-1}}$] = [$d^a_k, \overline{d^a_{k-1}}$] in $G_{k-1}$ (purple edge in $G_k$) from $d^a_k=d^u_{k-1}$ to $\overline{d^a_{k-1}}$, the black edge [$\overline{d^a_{k-1}}, d^a_{k-1}$] from $\overline{d^a_{k-1}}$ to $d^a_{k-1}$, continues as such through the red edges for the $G_i$ with $j \leq i \leq k$ (inserting black edges between), and ends by traversing the black edge [$\overline{d^a_{j+1}}, d^a_{j+1}$] from $\overline{d^a_{j+1}}$ to $d^a_{j+1}$, what is the red edge [$d^u_j, \overline{d^a_j}$] = [$d^a_{j+1}, \overline{d^a_j}$] in $G_j$ (purple edge in $G_k$), and then the black edge [$\overline{d^a_j}, d^a_j$] from $\overline{d^a_j}$ to $d^a_j$. In other words, a switch path alternates between the red edges (oriented from $d^u_j$ to $\overline{d^a_j}$) for the $G_j$ (for descending $j$) and the black edges between.
\end{df}

\begin{rk} We clarify here some ways in which switch paths and construction paths differ: \newline
\indent 1. The purple edges in the construction path for a construction composition $(g_{i-k}, \dots, g_i; G_{i-k-1}, \dots, G_i)$ are purple in each $G_l$ with $i-l \leq l < i$, for a switch path. They are red edges in the structure $G_l$ they are created in and then will not exist at all in the structures $G_m$ with $m<l$. The change of color (and disappearance) of red edges is the reason for (ss2) in the switch sequence definition. \newline
\indent 2. Unlike constructions paths, switch paths do not give subpaths of lamination leaves.
\end{rk}

\vskip10pt

The following lemma proves that switch paths are indeed smooth paths in destination LTT structures. It is important to note that this only holds when (ss1) and (ss2) hold.

\vskip10pt

\begin{lem}{\label{L:SwitchPathSmooth}} Let $(g_1, \dots, g_n, G_0, \dots, G_n)$ be an $\mathcal{ID}$ for a $\mathcal{G} \in \mathcal{PI}_{(r; (\frac{3}{2}-r))}$ and $(g_{i-k}, \dots, g_i; G_{i-k-1}, \dots, G_i)$ a switch sequence. Then the associated switch path forms a smooth path in the ltt structure $G_k$. \end{lem}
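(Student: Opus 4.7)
The plan is to parallel the proof of Lemma~\ref{L:ConstructionPathSmooth}, with extensions replaced by switches throughout. I would proceed by descending induction on $s$ from $s = i$ down to $s = i - k$, with inductive hypothesis that
\[
P_s := [d^u_i, \overline{d^a_i}, d^a_i, \overline{d^a_{i-1}}, d^a_{i-1}, \dots, d^a_{s+1}, \overline{d^a_s}, d^a_s]
\]
is a smooth path in the destination ltt structure ending in a black edge. The base case $s = i$ reduces to noting that the red edge $e^R_i = [d^u_i, \overline{d^a_i}]$ concatenated with the black edge $[\overline{d^a_i}, d^a_i]$ is a colored/black alternating path.

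For the inductive step I plan to adjoin the edge $[d^a_{s+1}, \overline{d^a_s}]$ followed by the black edge $[\overline{d^a_s}, d^a_s]$. The crucial identification $d^a_{s+1} = d^u_s$ is built into (SS2), so the candidate edge $[d^a_{s+1}, \overline{d^a_s}]$ coincides with $[d^u_s, \overline{d^a_s}]$, and by Corollary 5.6b of \cite{p12b} this is the red edge $e^R_s$ of $G_s$. Because $(g_{s+1}, G_s, G_{s+1})$ is a switch by (SS1), (swI) combined with Lemma 5.7 of \cite{p12b} implies that $D^Cg_{s+1}(e^R_s) = [d^a_{s+1}, \overline{d^a_s}]$ is a purple edge of $G_{s+1}$.

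I would then transport this purple edge forward under the subsequent switches $g_{s+2}, \dots, g_i$. Each is a switch by (SS1) and restricts on purple subgraphs to an isomorphism by (swI), changing only the second index of the vertex labeled $d^{pu}_{m-1}$ (which becomes that of $d^a_m$). The remaining clauses of (SS2) --- that the directions $d^u_n = d^a_{n+1}$ are pairwise distinct across $n$ and that $\overline{d^a_l} \neq d^a_{n+1}$ for all $n > l$ --- are exactly what I need to conclude that neither $d^a_{s+1}$ nor $\overline{d^a_s}$ ever plays the role of $d^{pu}_{m-1}$ in a later switch. Hence the purple edge $[d^a_{s+1}, \overline{d^a_s}]$ survives into $G_i$ unaltered. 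Adjoining it, followed by the always-present black edge $[\overline{d^a_s}, d^a_s]$, preserves the colored/black alternation and the endpoint sharing, yielding $P_s$ and completing the induction.

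The main obstacle is this bookkeeping: unlike in Lemma~\ref{L:ConstructionPathSmooth}, where extensions preserve every purple vertex label automatically, here each switch performs exactly one relabeling, and I must verify carefully that (SS2) rules out any collision between those relabelings and the endpoints of the purple edges already assembled into the path. Once this verification is in hand, the smoothness is just a matter of observing the colored/black alternation inherited from the concatenation.
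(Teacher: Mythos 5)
Your proposal is correct and follows essentially the same route as the paper: a descending induction whose base case is the red edge $e^R_i$ plus a black edge, with each purple edge of the switch path identified as the (image of the) red edge $[d^u_s,\overline{d^a_s}]=[d^a_{s+1},\overline{d^a_s}]$ of the intermediate structure $G_s$, pushed forward to the destination structure using exactly the (SS2) inequalities to keep both endpoint labels away from the relabeled direction $d^{pu}_{m-1}$ (the paper phrases this as $D^tg_{k,k-i}$ fixing the turn, you as transport under the (swI) purple-subgraph isomorphisms, which is the same bookkeeping). No gaps; if anything your explicit appeal to Lemma 5.7 of \cite{p12b} makes the "image is a purple edge" step slightly more carefully sourced than the paper's terse version.
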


\begin{proof}
The red edge in $G_k$ is [$d^u_k, \overline{d^a_k}$].  We are left to show (by induction) that:

\noindent (1) For each $1 \leq l < k$, [$d^u_{l}, \overline{d^a_{l}}]=[d^a_{l+1}, \overline{d^a_{l}}$] is a purple edge of $G_k$ and

\noindent (2) alternating the purple edges [$d^a_{l+1}, \overline{d^a_{l}}$] with the black edges [$\overline{d^a_{l}}, d^a_l$] gives a smooth path in $G_k$.

We prove the base case. By the switch properties, $e^R_{k-1}$ is [$d^u_{k-1}, \overline{d^a_{k-1}}]=[d^a_k, \overline{d^a_{k-1}}$]. Since $d^a_k \neq d^u_k$ and $\overline{d^a_{k-1}} \neq d^u_k$ (by the switch sequence definition), $D^tg_k(\{d^a_k, \overline{d^a_{k-1}}\})=\{d^a_k, \overline{d^a_{k-1}}\}$. So $[d^a_k, \overline{d^a_{k-1}}$], is a purple edge in $G_k$. Since $e^R_k= [d^u_k, \overline{d^a_k}]$, by including the black edge [$\overline{d^a_k}, d^a_k$], we have a path $[d^u_k, \overline{d^a_k}, d^a_k, \overline{d^a_{k-1}} ]$ in $G_k$ (smooth, as it alternates between colored and black edges). The base case is proved.

We prove the inductive step. By the inductive hypothesis we assume the sequence of switches for $g_k,\dots ,g_{k-i}$ gives us a smooth path $[d^u_k, \dots, \overline{d^a_{k-i}}]$ in $G_k$ ending with a purple edge with ``free'' vertex $\overline{d^a_{k-i-1}}$. We know $e^R_{k-i-1}= [d^u_{k-i-1}, \overline{d ^a_{k-i-1}}]= [d^a_{k-i}, \overline{d^a_{k-i-1}}$]. As long as $d^u_l \neq d^a_{k-i}$ and $d^u_l \neq \overline{d^a_{k-i-1}}$ for $k-i \leq l \leq k$ (holding by the switch sequence definition), $D^tg_{k,k-i}(\{d^u_{(k-i-1)}, \overline{d^a_{(k-i-1)}}\})= D^tg_{k,k-i}(\{d^a_{(k-i)}, \overline{d^a_{(k-i-1)}}\}) = \{d^a_{(k-i)}, \overline{d^a_{(k-i-1)}}\}$. So, $[d^a_{(k-i)}, \overline{d^a_{(k-i-1)}}]$ is a purple edge in $G_k$.

Since $[d^u_k, \dots, \overline{d^a_{k-i}}]$ is a smooth path in $G_k$ ending with a black edge, $[d^u_k, \dots, \overline{d^a_{k-i}}, d^a_{k-i}, \overline{d^a_{k-i-1}}]$ is also a smooth path in $G_k$, as [$\overline{d^a_{k-i}},d^a_{k-i}$] is a black edge in $G_k$ and [$d^a_{k-i}, \overline{d^a_{k-i-1}}$] a purple edge in $G_k$. \qedhere
\end{proof}

\begin{ex}{\label{E:SwitchPath}} In the ltt structure $G_i$ of Example \ref{E:ConstructionAutomorphism} we number the colored edges of a switch path:
~\\
\vspace{-8.5mm}
\begin{figure}[H]
\centering
\includegraphics[width=1.2in]{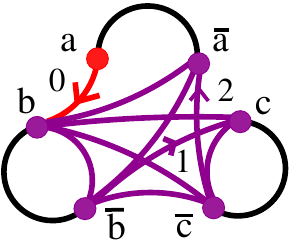}
\label{fig:SwitchPath} \\[-5mm]
\end{figure}

\noindent The switch sequence constructed from the switch path is:
~\\
\vspace{-9.75mm}
\begin{figure}[H]
\centering
\noindent \includegraphics[width=5.8in]{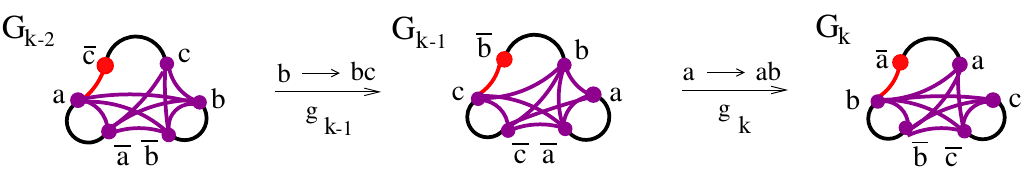}
\label{fig:SwitchSequence} \\[-5mm]
\end{figure}
\end{ex}

\noindent The red edge $e^r_k$ in $G_k$ is (0), the red edge $e^r_{k-1}$ in $G_{k-1}$ is (1), and the red edge $e^r_{k-2}$ in $G_{k-2}$ is (2).

\vskip10pt

The following lemma explains how, by inserting construction compositions into a well-chosen switch sequence, one can ensure their transition matrix is PF (for purposes of applying the FIC).

\begin{lem}{\label{L:SwitchSequence}} Suppose $g$ decomposes as $g_{i_m',i_m} \circ \cdots \circ g_{i_1',i_1}$ where:
\begin{enumerate}[itemsep=-1.5pt,parsep=3pt,topsep=3pt]
\item Each $g_{i_k',i_k}$ is a construction composition whose pure construction composition starts and ends with the same ltt structure.
\item For each edge pair $\{d_i, \overline{d_i}\}$, either $d_i$ or $\overline{d_i}$ is the red unachieved direction vertex for some $G_{i_k-1}$.
\end{enumerate}
Then the transition matrix for $g$ is PF.
\end{lem}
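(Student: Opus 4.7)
The plan is to show that some power $A^N$ of the transition matrix $A$ of $g$ has all strictly positive entries; this is equivalent to $A$ being Perron--Frobenius, as any power of a PF matrix is itself PF.

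First, using the Dehn-twist description preceding Subsection \ref{S:ConstructionCompositions} together with condition (1), I would view each $g_{i_k',i_k}$ as an automorphism of the marked rose (with $\Gamma_{i_k-1}$ consistently identified with $\Gamma_{i_k'}$ via the pure-construction ltt-loop): it sends the edge $e^{pu}_{i_k-1}$ to $w_k\cdot e^u_{i_k}$ with $w_k = e^a_{i_k}\cdots e^a_{i_k'}$, while acting trivially (up to the relabeling of the loop) on the remaining edges. I would then translate condition (2) into a covering statement about these twist words: by (swII) applied to the opening switch of the $k$th construction composition, $d^{pa}_{i_k-1}=d^u_{i_k-1}$, so the edge at the red vertex of $G_{i_k-1}$ is $e^{pa}_{i_k-1}$, which via the switch index-swap equals the leading letter $e^a_{i_k}$ of $w_k$. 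Condition (2) then says exactly that every edge of $\Gamma$ (up to orientation) occurs as such a leading letter for some $k$, hence appears inside at least one twist word.

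The heart of the proof is upgrading this covering statement into strong connectivity of the transition digraph $D$ on the edges of $\Gamma$, with an arrow $e\to e'$ whenever $e'$ occurs in $g(e)$. The previous paragraph already supplies an in-neighbor in $D$ for every edge. For the converse reachability, I would trace how the Dehn twists cascade around the cyclic decomposition: starting from any edge $e$, one application of $g$ rewrites $e$ (once it appears as some $e^{pu}_{i_k-1}$) into a word containing many edges, including some that are themselves $e^{pu}_{i_{k'}-1}$ for later compositions; iterating this chase --- using condition (2) so that no edge is orphaned from the rewriting, and condition (1) so that the twists compose coherently on a common rose --- shows that for some $N$, the image under $g^N$ of every edge contains every other edge. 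Aperiodicity comes for free from short cycles in $D$, forced by coincidences between leading letters $e^a_{i_k}$ and twisted edges $e^{pu}_{i_{k'}-1}$ dictated by condition (2). Strong connectivity plus aperiodicity yields $A^N>0$ for some $N$, so $A$ is PF.

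The main obstacle will be the reachability chase itself: while the covering statement derived from condition (2) is clean, converting it into quantitative reachability for every pair $(e,e')$ requires careful index bookkeeping along the ideal-decomposition loop. I expect to lean heavily on condition (1) to guarantee that the $m$ construction compositions stitch together into a genuine composition on a common rose (with no phantom fixed edges that would break irreducibility), and on condition (2) to ensure that the chase covers every edge without gaps.
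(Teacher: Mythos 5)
There is a genuine gap: the step you yourself flag as ``the main obstacle'' --- the reachability chase --- is precisely the content of the lemma, and your outline does not supply the mechanism that makes it work. Your covering statement (via (swII), every edge pair occurs, up to index, as a leading letter $e^a_{i_k}$ of some twist word) only gives each edge an in-neighbor in the transition digraph; it does not by itself let you propagate from an arbitrary edge to every other edge, and ``iterating the chase'' is exactly what needs proof. The missing ingredient is the actual role of hypothesis (1), which you use only to say the twists ``compose coherently on a common rose.'' In the paper's argument, (1) is used quantitatively: because the pure construction composition starts and ends with the same ltt structure, the unachieved edge is unchanged across it, so $e^u_{i_k-1}=e^u_{i_{k-1}}$; combined with (swII), $e^{pa}_{i_k-1}=e^u_{i_k-1}=e^u_{i_{k-1}}$, i.e.\ the leading letter $e^a_{i_k}$ of the $k$th twist word has the same index as the edge twisted by the $(k-1)$st construction composition. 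Hence $e^u_{i_k}$ maps over $e^u_{i_{k-1}}$ under $g$ itself, giving a directed chain through all the unachieved edges, and by (2) these represent every edge pair. That chain, not a general cascade, is what yields strong connectivity; without it your ``cascade'' could in principle stall on edges that never reappear as some $e^{pu}_{i_{k'}-1}$ in the image of the edge you started from.

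Two smaller points. First, once the chain is in hand, aperiodicity does not need ``coincidences between leading letters and twisted edges'': every ideal-decomposition generator maps each edge over the edge of the same index ($g_k(e_{k-1,t})=e_{k,t}\cdots$), so the diagonal of the transition matrix of $g$ is already positive and traversals are never cancelled, which also justifies passing from the single-generator statement to the full composition. Second, your reduction to ``some power $A^N>0$'' is fine, but note the paper works directly with the unachieved edges: by (2) every edge pair is one of the $e^u_{i_k}$, so it suffices that each $e^u_{i_k}$ maps over $e^u_{i_{k-1}}$, which is the one-line computation above. If you make the $e^u_{i_k-1}=e^u_{i_{k-1}}$ identity explicit and record the positive-diagonal observation, your digraph framework closes up and agrees with the paper's proof.
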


\begin{proof}
It suffices to show each $e^{u}_{i_k}$ is in the image of each $e^{u}_{i_j}$. In fact, it suffices to show each $e^{u}_{i_{k-1}}$ is in the image of each $e^{u}_{i_k}$. Note $g_{i_k}(e^{pu}_{(i_k-1)}) = e^a_{i_k} e^u_{i_k}$. Since $(g_{i_k}, G_{(i_k-1)}, G_{i_k})$ is a switch, $e^{pa}_{(i_k-1)}=e^{u}_{(i_k-1)}$. Since $g_{i_{(k-1)}',i_{(k-1)}}$ is a construction composition whose pure construction composition starts and ends with the same ltt structure, $e^{u}_{(i_k-1)}=e^{u}_{i_{(k-1)}}$. So $e^{pa}_{(i_k-1)}= e^{u}_{i_{(k-1)}}$ and $e^{u}_{i_k}$ maps over $e^{u}_{i_{k-1}}$. ($e^{u}_{i_{k}}$ has the same second index as $e^{pu}_{i_k}$ and $e^{u}_{i_{k-1}}$ has the same second index as $e^{a}_{i_k}$. Use Lemma 5.3 of \cite{p12b}.) \qedhere
\end{proof}

\section{Full Irreducibility Criterion}{\label{Ch:FIC}}

We prove a ``Folk Lemma'' giving a criterion, the ``Full Irreducibility Criterion (FIC),'' for an irreducible tt representative to be fully irreducible. Our original approach involved the ``Weak Attraction Theorem,'' several notions of train tracks, laminations, and the basin of attraction for a lamination. However, Michael Handel graciously provided a method to complete it, making much of our initial work unnecessary.  Our proof here uses Handel's recommendation. [K12] gives a criterion based on ours.

The proof we give uses the relative train tracks (rtts) of \cite{bh92} and the completely split relative train tracks (CTs) of \cite{fh11}. If $\phi \in Out(F_r)$ is rotationless and $\mathcal{C}$ is a nested sequence of $\phi$-invariant free factor systems, then $\phi$ is represented by a CT and filtration realizing $\mathcal{C}$ (\cite{fh11}, Theorem 4.29). We use from the CT definition (CT5): for a fixed stratum $H_t$ with unique edge $E_t$, either $E_t$ is a loop or each end of $E_t$ is in $\Gamma_{t-1}$.

Our definition of the attracting lamination for an outer automorphism will be as in \cite{bfh00}. A complete summary of relevant definitions can be found in \cite{p12a}. $\mathcal{L}(\phi)$ will denote the set of attracting laminations for $\phi$. By \cite{bfh00}, for a $\phi \in Out(F_r)$, there exists a correspondence between $\mathcal{L}(\phi)$ and the EG-strata of a rtt representative $g\colon\Gamma \to \Gamma$ of $\phi$: For each EG stratum $H_t$, there exists a unique attracting lamination $\Lambda_t$ with $H_t$ as the highest stratum crossed by the realization $\lambda \subset \Gamma$ of a $\Lambda_t$-generic line. $\Lambda(\phi)$ will denote the unique attracting lamination for an irreducible $\phi$.

Free factor support is defined in \cite{bfh00} (Corollary 2.6.5). The relevant information for our FIC proof is: if a lamination is carried by a proper free factor, then its support is a proper free factor.

\begin{prop}{\label{P:FIC}} (The Full Irreducibility Criterion)
Let $g$ be a train track representive of an outer automorphism $\phi \in Out(F_r)$ such that
~\\
\vspace{-6mm}
\begin{itemize}
\item [(I)] $g$ has no periodic Nielsen paths, \\[-6mm]
\item [(II)] the transition matrix for $g$ is Perron-Frobenius, and \\[-6mm]
\item [(III)] all local Whitehead graphs $LW(x;g)$ for $g$ are connected.
\end{itemize}
\indent Then $\phi$ is fully irreducible.
\end{prop}

\begin{proof} Suppose $g\colon \Gamma \to \Gamma$ is as in the statement. Since $g$ has a PF transition matrix, as an rtt, it has precisely one stratum and that stratum is EG. Hence, it has precisely one attracting lamination \cite{bfh00}. Since the number of attracting laminations for a tt representative is independent of the representative choice, any $\phi$ representative would also have precisely one attracting lamination.

Suppose, for contradiction's sake, $\phi$ were not fully irreducible. Then some $\phi^k$ would be reducible. If necessary, take an even higher power $R$ so that $\phi^R$ is also rotationless (this does not change reducibility). Note, since $\mathcal{L}(\phi)$ is $\phi$-invariant, any $\phi^R$ representative also has precisely one attracting lamination.

Since $\phi^R$ is reducible (and rotationless), there exists a CT representative $h: \Gamma' \to \Gamma'$ of $\phi^R$ with more than one stratum (\cite{fh11}, Theorem 4.29). Since $\phi^R$ has precisely one attracting lamination, $h$ has precisely one EG-stratum $H_t$. Each stratum $H_i$, other than $H_t$ and any zero strata, would be an NEG-stratum consisting of a single edge $E_i$ (\cite{fh11}, Lemma 4.22). We consider separately the cases where $t=1$ and $t>1$.

Since the transition submatrix for any zero stratum is zero (hence every edge of the stratum is mapped to a lower filtration element by $h$), $H_1$ could not be a zero stratum. Thus, if $t>1$, then $H_1$ is NEG and must consist of a single edge $E_1$. Since $H_1$ is bottom-most, it would be fixed, as there are no lower strata for its edge to be mapped into. According to (CT5), $E_1$ would then be an invariant loop. This would imply $\phi^R$ has a rank-1 invariant free factor. However, $g$ (hence $g^R$) was pNp-free. So $\phi^R$ could not have a rank-1 invariant free factor. We have reached a contradiction for $t>1$.

Assume $t=1$. Then $\Lambda(\phi^R)(=\Lambda(\phi))$ is carried by a proper free factor. Proposition 2.4 of \cite{bfh97} says, if a finitely generated subgroup $A \subset F_r$ carries $\Lambda_{\phi}$, then $A$ has finite index in $F_r$. The necessary conditions for this proposition are actually only: 1. the transition matrix of $g$ is irreducible and 2. each $LW(g;v)$ is connected. (Up to the contradiction in the proof of Proposition 2.4 of \cite{bfh97}, the only properties used in the proof are that the support is finitely generated, proper, and carries the lamination. The contradiction uses Lemma 2.1 in \cite{bfh97}, which shows (1) and (2) carry over to lifts of $g$ to finite-sheeted covering spaces, using no properties other than properties (1) and (2).) Assumptions (1) and (2) are assumptions in this lemma's hypotheses and $\Lambda(\phi)$ is still the attracting lamination for $g$. So we can apply the proposition to contradict $\Lambda(\phi)$ having proper free factor support: Applying the proposition, since proper free factors have infinite index, the support must be the whole group. This contradicts that the EG-stratum is $H_1$ and that there must be more than one stratum.

We have thus shown that we cannot have more than one stratum with $t=1$ or $t >1$. So all powers of $\phi$ must be irreducible and $\phi$ is fully irreducible, as desired. \qedhere
\end{proof}

\begin{lem}{\label{P:RepresentativeLoops}} Suppose $\mathcal{G} \in \mathcal{PI}_{(r; (\frac{3}{2}-r))}$ and $(g_1, \dots, g_k; G_
 0, \dots, G_k)$, $g=g_k\circ \cdots \circ g_1$, is a rotationless $\mathcal{ID}$, satisfying:
~\\
\vspace{-5mm}
\begin{description}
\item 1. $\mathcal{PI}(G(g)) \cong \mathcal{G}$. More precisely, $\mathcal{G} \cong \mathcal{SW}(g;v)$. \\[-6mm]
\item 2. And for each $1 \leq i,j \leq q$, there exists some $k \geq 1$ such that $g^k(E_j)$ contains either $E_i$ or $\bar E_i$. \\[-6mm]
\item 3. And $g$ has no periodic Nielsen paths. \\[-6mm]
\end{description}
\noindent Then $g$ is a tt representative of a $\phi \in \mathcal{AFI}_r$ such that $\mathcal{IW}(\phi)=\mathcal{G}$.
\end{lem}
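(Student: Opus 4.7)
The plan is to verify the three hypotheses of the Full Irreducibility Criterion (Lemma \ref{L:FIC}) for $g$, and then read off ageometricity together with the identification of $\mathcal{IW}(\phi)$. Since $(g_1,\dots,g_k; G_0,\dots,G_k)$ is an $\mathcal{ID}$, $g$ is by definition a train track on the rose $\Gamma$. Hypothesis (I) of the FIC is precisely assumption 3. For hypothesis (III), condition 1 guarantees that $\mathcal{SW}(g;v)\cong\mathcal{G}$ already has the full $2r-1$ periodic direction vertices, leaving a unique non-periodic direction $d^u$ at $v$; by the structure theory recalled from \cite{p12b}, $G(g)$ is then an $(r;(\frac{3}{2}-r))$ ltt structure, so $\mathcal{LW}(g)=\mathcal{C}(G(g))$ is obtained from the connected graph $\mathcal{PI}(G(g))\cong\mathcal{G}$ by adjoining $d^u$ via a single red edge, and is therefore connected. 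Since $\Gamma$ is a rose there is only this one vertex to check.

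The step requiring a little care is hypothesis (II), since assumption 2 only asserts irreducibility of the transition matrix $M(g)$, whereas the FIC asks for the strictly stronger Perron--Frobenius property. I will upgrade to PF by exhibiting a positive diagonal entry of $M(g)$ and then invoking the standard fact that an irreducible non-negative integer matrix with a positive diagonal entry is primitive, i.e.\ PF in the sense of \cite{bh92}. Part (c) of Proposition \ref{P:IdealDecomposition} provides a fixed direction $d_t=D_0(e_t)$ for every $t\neq j_n$, and $Dg(d_t)=d_t$ forces the first edge of $g(e_t)$ to be $e_t$ itself, so $M(g)_{tt}\geq 1$. Combined with the irreducibility coming from condition 2, this completes hypothesis (II).

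With the three FIC hypotheses verified, $\phi \in \mathcal{FI}_r$. Ageometricity follows because $g$ is a pNp-free train-track representative of a fully irreducible $\phi$: by \cite{bh92} the stable representative in this setting is itself pNp-free, so $\phi\in\mathcal{AFI}_r$. Finally, for a pNp-free train track $g$ on the rose with single vertex $v$, the identification $\mathcal{IW}(\phi)\cong\mathcal{SW}(g;v)$ recorded in Section \ref{Ch:PrelimDfns}, together with condition 1, gives $\mathcal{IW}(\phi)\cong\mathcal{G}$. I expect the PF upgrade to be the only non-routine point of the argument; everything else assembles directly from the FIC and the definitions already in place.
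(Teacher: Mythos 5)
Your proposal is correct and follows essentially the same route as the paper: the paper's proof is a short application of the FIC, using (2) for the Perron--Frobenius property, pNp-freeness for $\mathcal{IW}(\phi)=\mathcal{SW}(g)=\mathcal{PI}(G(g))\cong\mathcal{G}$, and citing \cite{p12b} (Lemma 5.3a) for $g$ being a train track. Your only real addition is making explicit the upgrade from irreducibility to PF via the fixed directions of Proposition \ref{P:IdealDecomposition}(c) giving positive diagonal entries --- a point the paper passes over by simply asserting that (2) implies PF --- which is extra care within the same argument rather than a different approach.
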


\begin{proof} By the FIC, we only need to show $g$ is a tt map, the transition matrix of $g$ is PF, and $\mathcal{IW}(\phi)=\mathcal{G}$. (2) implies that the transition matrix is PF. $g$ is a tt map by \cite{p12b} Lemma 5.3a. Since $g$ has no pNp's, $\mathcal{IW}(g)=\mathcal{SW}(g)$. By the definition of $G(g)$, we know $\mathcal{PI}(G(g))=\mathcal{SW}(v;g)$. \qedhere
\end{proof}

\begin{rk} In the language of \cite{p12b}, the conditions of Lemma \ref{P:RepresentativeLoops} could be stated as
~\\
\vspace{-1mm}
$$L(g_1, \dots, g_k; G_0, G_1 \dots, G_{k-1}, G_k)=E(g_1, G_{0}, G_1) * \dots * E(g_k, G_{k-1}, G_k)$$
being a loop in $\mathcal{ID}(\mathcal{G})$ satisfying (1)-(3) and inducing a map fixing all periodic directions.
\end{rk}

\section{Nielsen path identification and prevention}{\label{Ch:NPIdentification}}

We give (Proposition \ref{P:NPIdentification}) a method for finding all ipNp's, thus pNp's, for a tt map $g\colon\Gamma \to \Gamma$ ideally decomposed as $\Gamma = \Gamma_0 \xrightarrow{g_1} \Gamma_1 \xrightarrow{g_2} \cdots \xrightarrow{g_{n-1}}\Gamma_{n-1} \xrightarrow{g_n} \Gamma_n = \Gamma$ where $(g_0, \dots, g_n; G_0, \dots, G_n)$ is an admissible composition. Note that, for each $k$, we know that $T_k= \{d^{pu}_k, d^{pa}_k \}$ is the unique illegal turn for $f_k= g_k \circ \cdots \circ g_{k+1}: \Gamma_k \to \Gamma_k$ and $\mathcal{AM}$II guarantees each $f_k$ is also a tt map.

As a warm-up, Example \ref{E:NPIdentification} demonstrates the procedure's application to an ideally decomposed representative. We then explain the procedure's steps and how we used them in Example \ref{E:NPIdentification}. This section concludes with a proof of the procedure's validity.

\begin{ex}{\label{E:NPIdentification}} We apply the procedure to show the following ideally decomposed tt map has no pNp's.

For simplicity, the ltt structures $G_i = G(f_i)$ are shown without black edges $[e_j]$ connecting the vertex pairs $\{d_j, \overline{d_j} \}$. Underneath each ltt structure $G_i$ we included the illegal turn $T_i$ for the generator $g_i$. We often abuse notation by writing $e$ for $D_0(e)$ where $e \in \{a, \bar a, b, \bar b, c, \bar c \}$.
~\\
\vspace{-9mm}
\noindent \begin{figure}[H]
\centering
\includegraphics[width=6.5in]{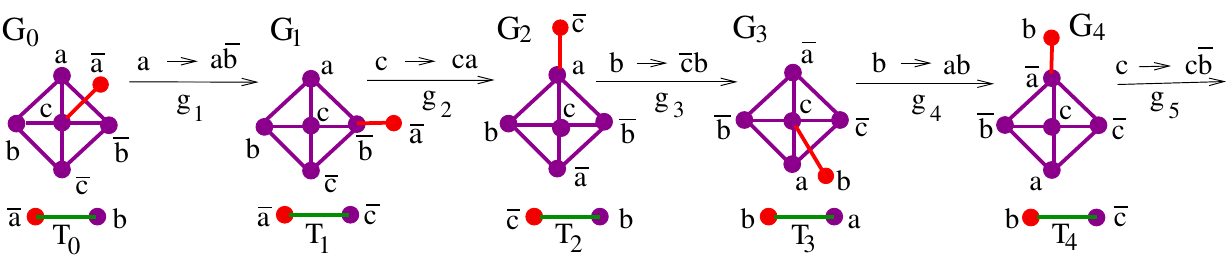}
\label{fig:NPAlgorithmExample1}
\end{figure}
~\\
\vspace{-20mm}
\noindent \begin{figure}[H]
\centering
\includegraphics[width=6.5in]{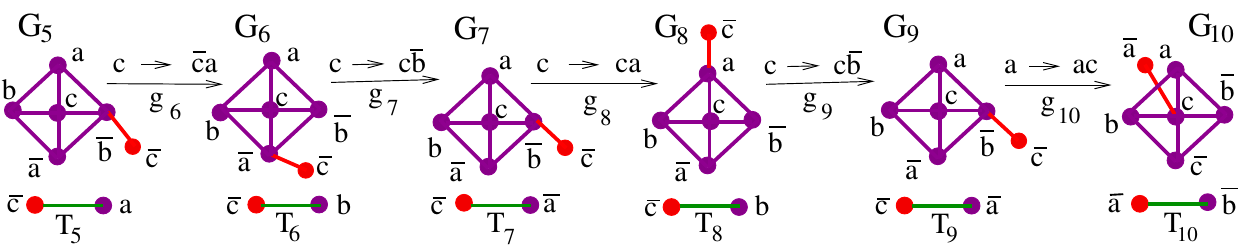}
\label{fig:NPAlgorithmExample2}
\end{figure}
~\\
\vspace{-20mm}
\noindent \begin{figure}[H]
\centering
\includegraphics[width=6.4in]{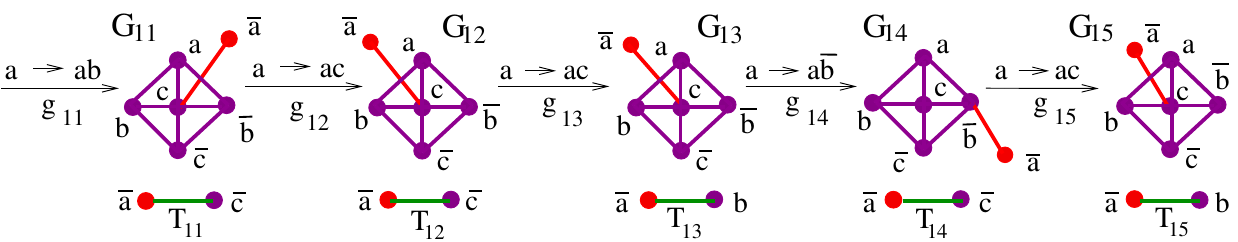}
\label{fig:NPAlgorithmExample3} \\[-5mm]
\end{figure}

\vskip10pt

\noindent \emph{Procedure Applied}:

Since $\{\overline{a}, b\}$ is the only illegal turn for $g$, any ipNp would contain it as its unique illegal turn. We are thus trying to find an ipNp $\overline{\rho_1}\rho_2$ where $\rho_1= \overline{a}e_2e_3\dots$ and $\rho_2=be_2'e_3'\dots$ are legal paths and all $e_i, e_i' \in \mathcal{E}(\Gamma)$, except that the final edge in either $\rho_1$ or $\rho_2$ may be partial.

Since $g_1(b)=b$ is the initial subpath of $g_1(\overline{a})= b\overline{a}$, proper cancelation would force $\rho_2$ to contain another edge $e_2'$ after $b$ (Proposition \ref{P:NPIdentification} I). Since $\overline{a}$ labels the red vertex in $G_1$ (i.e. $D_0(\overline{a})$ is the unachieved direction $d^u_1$), the edge $e_2'$ would be such that $D_0(e_2')$ is a preimage under $Dg_1$ of $D_0(\overline{c})$ (proper cancelation requires $Dg_{2,1}(e_2')= Dg_2(\overline{a})= D_0(\overline{c})$ but, since $Dg_1(e_2')$ cannot be the unachieved direction $\overline{a}$, and the only other preimage under $Dg_2$ of the twice-achieved direction $\overline{c}$ is the other direction in the illegal turn $T_1 = \{\overline{a}, \overline{c} \}$, namely the twice-achieved direction $\overline{c}$, we must have $Dg_1(e_2')= \overline{c}$). The only preimage under $Dg_1$ of $\overline{c}$ is $\overline{c}$. So $e_2'= \overline{c}$. Thus, so far, we have $\rho_1= \overline{a}\dots$ and $\rho_2=b\overline{c}\dots$

Since $g_{2,1}(\overline{a})=g_2(b)\overline{a}$ is the initial subpath of $g_{2,1}(b\overline{c})= g_2(b)\overline{a}\overline{c}$, we know $\rho_1$ would contain another edge $e_2$ after $\overline{a}$ (Proposition \ref{P:NPIdentification} III). Since $\overline{c}$ labels the red unachieved direction vertex in $G_2$, we know $D_0(e_2)$ cannot be a preimage under $Dg_{2,1}$ of $\overline{c}$. So our best hope is $Dg_{3,1}(e_2)= Dg_3(\overline{c})$. This can only happen if $Dg_{2,1}(e_2)$ is a preimage under $Dg_3$ of the other direction in the illegal turn $T_2 = \{\overline{c}, b \}$, namely the twice-achieved direction $b$. There are two such preimages under $Dg_{2,1}$ (the two direction in the illegal turn $T_0 = \{\overline{a}, b \}$):  Case 1 will be where $e_2=\overline{a}$ and Case 2 where $e_2=b$. We analyze each of these case.

For Case 1, suppose $e_2= \overline{a}$. Since $g_{3,1}(b\overline{c})= g_{3,2}(b)g_3(\overline{a})\overline{c}$ is the initial subpath of $g_{3,1}(\bar a \bar a)= g_{3,2}(b)g_3(\overline{a})\overline{c}b\overline{a}$, we know $\rho_2$ would contain another edge $e_3'$ after $\overline{c}$. Since $b$ labels the red unachieved direction vertex in $G_3$, we would need $D_0(e_3')$ to be a preimage of $a$ under $Dg_3$ (since $T_3 = \{a, b \}$, this follows as above). The only such preimage is $a$. So $e_3'=a$, giving $\rho_1= \bar a \bar a \dots$ and $\rho_2= b\overline{c}a \dots$.

Note that $g_{4,1}(b\overline{c}a)= g_{4,2}(b)g_{4,3}(\overline{a})g_4(\overline{c})a\overline{b}\overline{a}c$ and $g_{4,1}(\bar a \bar a)= g_{4,2}(b)g_{4,3}(\overline{a})g_4(\overline{c})ab\overline{a}$. So, since $\{\overline{b}, b\} \neq T_4$ (and $b \neq \overline{b}$), we could not have $\rho_1= \bar a \bar a$ and $\rho_2= b\overline{c}a$ (Proposition \ref{P:NPIdentification} IIc). Case 1 could not occur.

For Case 2, suppose $e_2=b$. Since $g_{3,1}(b\overline{c})= g_{3,2}(b)g_3(\overline{a})\overline{c}$ is the initial subpath of $g_{3,1}(\overline{a}b)= g_{3,2}(b)g_3(\overline{a})\overline{c}b$, we know $\rho_2$ would contain another edge $e_3'$ after $\overline{c}$. Since $b$ labels the red unachieved direction vertex in $G_3$, we can follow the logic above and see that $e_3'$ would be $a$, giving $\rho_1= \overline{a}b \dots$ and $\rho_2= b\overline{c}a \dots$. Since $g_{4,1}(b\overline{c}a)= g_{4,2}(b)g_{4,3}(\overline{a})g_4(\overline{c})a\overline{b}\overline{a}c$ and $g_{4,1}(\overline{a}b)= g_{4,2}(b) g_{4,3}(\overline{a}) g_4(\overline{c})ab$, cancelation leaves $\{\overline{b}, b\}$. As above, we could not have $\rho_1=\overline{a}b \dots$ and $\rho_2= b\overline{c}a \dots$.

This rules out all possibilities for $\overline{\rho_1}\rho_2$ and so $g$ has no ipNp's, thus no pNp's, as desired. \qedhere
\end{ex}

\vskip10pt

\noindent \emph{Explanation of Procedure Applied}:

\vskip5pt

\indent \emph{Let $(g_0, \dots, g_n; G_0, \dots, G_n)$ be an $\mathcal{ID}$ admissible composition with the standard notation. We give the general procedure for finding any ipNp $\rho= \overline{\rho_1} \rho_2$ for $g= g_0 \circ \cdots \circ g_n$, where $\rho_1 = e_1\dots e_m$; $\rho_2 = e_1'\dots e_{m'}'$; $e_1,\dots, e_m, e_1',\dots,e_{m'}' \in \mathcal{E}(\Gamma)$; and $\{D_0(e_1), D_0(e_1')\}$ $=\{d_1, d_1'\}$ is the unique illegal turn of $\rho$. We let $\rho_{1,k}=e_1\dots e_k$ and $\rho_{2,l}= e_1'\dots e_l'$ throughout the procedure. After each step is explained in italics, we explain its use in Example \ref{E:NPIdentification}.}

\begin{description}
\item[(I)] \emph{Apply $g_1$, $g_2$, etc, to $e_1$ and $e_1'$ until $Dg_{j,1}(e_1')=Dg_{j,1}(e_1)$. Either $g_{j,1}(e_1)$ is the initial subpath of $g_{j,1}(e_1')$ or vice versa. Without generality loss (or adjust notation) assume $g_{j,1}(e_1')$ is the subpath of $g_{j,1}(e_1)$, so $g_{j,1}(e_1)= g_{j,1}(e_1')t_2 \dots$, for some edge $t_2$. Then $\rho_2$ must contain another edge $e_2'$.} \\[-6.5mm]
\end{description}

\hspace{.45in}\parbox{5.75in}{\fontsize{10pt}{12pt}\selectfont $g_1(b)=b$ was the initial subpath of $g_1(\overline{a})= b\overline{a}$. This implied $\rho_2$ had an edge after $b$.}
~\\
\vspace{-6mm}
\begin{description}
\item[(II)] \emph{Continue composing generators $g_i$ until either:} \\[-8mm]
\end{description}
~\\
\vspace{-9mm}
\begin{itemize}
\item[\textbf{(a)}] \emph{one obtains a $g^{p'}$ with  $g^{p'}(\rho_{2,k})=\tau'e_1'\dots$ and $g^{p'}(\rho_{1,s})=\tau'e_1\dots$ for a legal path $\tau'$ (proceed to V),} \\[-4.5mm]
\end{itemize}

\hspace{.45in}\parbox{5.75in}{\fontsize{10pt}{12pt}\selectfont IIa did not occur in Example \ref{E:NPIdentification}. When it occurs, it makes an ipNp promising. Va may identify an ipNp, if it exists, as does IVc. However, IVc involves ``trimming,'' since it involves the case where the  path's initial and final edges are only partial edges. IVb and IVd direct one to possibly find an ipNp by continuing to add edges.}
~\\
\vspace{-2.5mm}
\indent \begin{itemize}
\item[\textbf{(b)}]  \emph{$g_{j,l}(\rho_{2,k})$ is a subpath of $g_{j,l}(\rho_{1,s})$ or vice versa (proceed to III),} \\[-4mm]
\end{itemize}

\hspace{.45in}\parbox{5.75in}{\fontsize{10pt}{12pt}\selectfont IIb occurs in both Case 1 and Case 2 of Example \ref{E:NPIdentification}. In Case 1, IIIa is used to obtain $\rho_{1,2}=\bar a \bar a$ and $\rho_{2,3}= b \bar c a$. In Case 2, IIIa yields $\rho_{1,2}=\bar a b$ and $\rho_{2,3}= b \bar c a$.}
~\\
\vspace{-3.25mm}
\indent \begin{itemize}
\item[\textbf{(c)}]  \emph{or some $g_{l,1}\circ g^{p'}(\rho_{2,k})= \tau'\gamma_{2,k}$ and $g_{l,1}\circ g^{p'}(\rho_{1,s})= \tau'\gamma_{1,s}$ where $T_l \neq \{D_0(\gamma_{2,k}), D_0(\gamma_{1,s})\}$. In this case there cannot be an ipNp with $\overline{\rho_{2,k}}\rho_{1,s}$ as a subpath. Proceed to VI.} \\[-4.5mm]
\end{itemize}

\hspace{.45in}\parbox{5.75in}{\fontsize{10pt}{12pt}\selectfont In both Case 1 and Case 2, after applying IIIa to obtain $\rho_{1,2}$ and $\rho_{2,3}$, IIc occurs with $\{D_0(\overline{b}), D_0(b)\} \neq T_4$. In both cases, $\tau=g_{4,2}(b)g_{4,3}(\overline{a})g_4(\overline{c})a$. In Case 1, $\{D_0(\gamma_{2,k}), D_0(\gamma_{1,s})\}=\{D_0(\overline{b}\overline{a}c), D_0(b\overline{a})\}= \{D_0(\overline{b}), D_0(b)\}$. In Case 2, $\{D_0(\gamma_{2,k}), D_0(\gamma_{1,s})\}=\{D_0(\overline{b}\overline{a}c), D_0(b)\}=\{D_0(\overline{b}), D_0(b)\}$.}
~\\
\vspace{-2.5mm}
\begin{description}
\item[(III)] \emph{Suppose $g_{j,1}(\rho_{1,k})=g_{j,1}(\rho_{2,s})t_{s+1}\dots$ (or switch $e_i$ for $e_i'$, $\rho_1$ for $\rho_2$, etc). $\rho_2$ must have another edge $e_{s+1}'$ after $\rho_{2,s}$. There are two cases to consider:} \\[-4.5mm]
\end{description}

\hspace{.45in}\parbox{5.75in}{\fontsize{10pt}{12pt}\selectfont Since $\rho_{1,1}=b$, $\rho_{2,1}=\overline{a}$, and $g_{2,1}(\overline{a})=g_2(b)\overline{a}$ is the initial subpath of $g_{2,1}(b\overline{c})= g_2(b)\overline{a}\overline{c}$, we assumed $g_{2,1}(\rho_{1,1})=g_{2,1}(\rho_{2,1})t_2 \dots$. In particular, $t_{s+1}=\overline{a}$.}
~\\
\vspace{-3.5mm}
\begin{itemize}
\item[\textbf{(a)}] \emph{If $D_0(t_{s+1})=d^{u}_{j}$, then the different possibilities for $e_{s+1}'$ are determined by the directions $d_{s+1}'$ such that $T_j=\{Dg_{j,1}(d_{s+1}'), D_0(t_{s+1})\}$ where $D_0(e_{s+1}')=d_{s+1}'$.} \\[-4.5mm]
\end{itemize}

\hspace{.45in}\parbox{5.75in}{\fontsize{10pt}{12pt}\selectfont $D_0(\overline{a})$ labeled the red vertex in $G_1$, implying $D_0(\overline{a})=d^{u}_1$, i.e. $D_0(t_{s+1})=d^{u}_{j}$. $T_1= \{\overline{a}, \overline{c} \}$ implied $Dg_1(d_2')= \overline{c}$. Since the only preimage of $\overline{c}$ under $Dg_1$ was $\overline{c}$, we needed $d_2'= D_0(\overline{c})$, $e_2'= \overline{c}$, and $\rho_{2,2}= b\overline{c}$. We hit this case again when determining possibilities for $e_2$.}
~\\
\vspace{-2.5mm}
\indent \begin{itemize}
\item[\textbf{(b)}] \emph{If $D_0(t_{s+1}) \neq d^{u}_{j}$, the possibilities for $e_{s+1}'$ are all edges $e_{s+1}'$ such that $Dg_{j,1}(D_0(e_{s+1}'))=D_0(t_{s+1})$.} \\[-4.5mm]
\end{itemize}

\hspace{.45in}\parbox{5.75in}{\fontsize{10pt}{12pt}\selectfont IIIb did not occur in Example \ref{E:NPIdentification}. If $D_0(\overline{a}) \neq d^u_1$, we would have looked for edges $e_2'$ with $Dg_1(d_2')=\overline{a}$. (Otherwise we could not have had $Dg_{2,1}(d_2')=\overline{a}$.)}

\vskip3pt

\noindent \indent \emph{Since $\rho_2$ must be legal, ignore choices for $d_{s+1}'$ where $T_0= \{\overline{d_s'}, d_{s+1}'\}$ is the illegal turn for $g$. Each remaining $d_{s+1}'$ in (a) or (b) gives another prospective ipNp one must apply the steps to.}

\vskip4.5pt

\hspace{.45in}\parbox{5.75in}{\fontsize{10pt}{12pt}\selectfont Only $e_2'=\overline{c}$ satisfied $Dg_1(e_2')= \overline{c}$. However, both $\overline{a}$ and $b$ (referred to as Case 1 and Case 2) gave prospective directions analyzed in finding $e_2$. For Case 1 and Case 2 we separately continued through the steps.}
~\\
\vspace{-2.5mm}
\begin{description}
\item[(IV)] \emph{For each $p' \geq 1$ with $g^{p'}(\rho_{2,m})=\tau'e_1\dots$ and $g^{p'}(\rho_{1,n})=\tau'e_1'\dots$ for some legal path $\tau'$ (and appropriate m and n), check if $g^{p'}_{\#}(\overline{\rho_{1,n}}\rho_{2,m}) \subset \overline{\rho_{1,n}}\rho_{2,m}$ or vice versa.} \\[-5.5mm]
\indent \begin{itemize}
\item[\textbf{(a)}] \emph{If, for some $p' \geq 1$, $g^{p'}_{\#}(\overline{\rho_{1,n}}\rho_{2,m}) = \overline{\rho_{1,n}}\rho_{2,m}$, then $\overline{\rho_{1,n}}\rho_{2,m}$ is the only possible ipNp for $g$.}\\[-5mm]
\item[\textbf{(b)}] \emph{For each $p' \geq 1$ such that $g^{p'}_{\#}(\overline{\rho_{1,n}}\rho_{2,m}) \subset \overline{\rho_{1,n}}\rho_{2,m}$ (containment proper), proceed to V.} \\[-5mm]
\item[\textbf{(c)}] \emph{If $\overline{\rho_{1,n}}\rho_{2,m} \subset g^{p'}_{\#}(\overline{\rho_{1,n}}\rho_{2,m})$ (containment proper): The final occurrence of $e_n$ in the copy of $\rho_{1,n}$ in $g^{p'}(\overline{\rho_{1,n}}\rho_{2,m})$ must be from $g^{p'}(e_n)$ and the final occurrence of $e_m'$ in the copy of $\rho_{2,m}$ in $g^{p'}(\overline{\rho_{1,n}}\rho_{2,m})$ must be from $g^{p'}(e_m')$. Thus, $e_n$ and $e_m'$ have fixed points. Replace $\overline{\rho_{1,n}}\rho_{2,m}$ with $\overline{\rho_{1,n}'} \rho_{2,m}'$, where $\overline{ \rho_{1,n}'}\rho_{2,m}'$ is $\overline{\rho_{1,n}}\rho_{2,m}$, but with $e_n$ and $e_m'$ replaced by partial edges ending at the fixed points. Repeat until some $\overline{ \rho_{1,n}'}\rho_{2,m}'$ is an ipNp.} \\[-5mm]
\item[\textbf{(d)}] \emph{If we do not have $g^{p'}_{\#}(\overline{\rho_{1,n}}\rho_{2,m}) \subset \overline{\rho_{1,n}}\rho_{2,m}'$ or vice versa for any $1 \leq p' \leq b$, there is only one possibility for $\overline{\rho_{2,m}}\rho_{1,n}$ to be a subpath of an ipNp. It is when $g^{p'}_{\#}(\overline{\rho_{1,n}}\rho_{2,m}) =\overline{\gamma_{1,n}}\gamma_{2,m}$ and either $\gamma_{1,n} \subset \rho_{1,n}$ and $\rho_{2,m} \subset \gamma_{2,m}$ or $\gamma_{2,m} \subset \rho_{2,m}$ and $\rho_{1,n} \subset \gamma_{1,n}$. In this case, apply V to the side too short. Otherwise, there cannot be an ipNp with $\overline{\rho_{2,m}}\rho_{1,n}$ as a subpath (proceed to VI).} \\[-7.5mm]
\end{itemize}
\end{description}
~\\
\vspace{-9mm}
\begin{description}
\item[(VI)] \emph{Assume $g^{p'}(\bar \rho_{1,n}\rho_{2,m}) \subset \overline{\rho_{1,n}}\rho_{2,m}$ (containment proper) and, without generality loss, $g^{p'}(\overline{\rho_{1,n}}\rho_{2,m})t_{m+1} \subset \overline{\rho_{1,n}}\rho_{2,m}$ for some $t_{m+1}$. For each $e_i$ such that $Dg^{p'}(D_0(e_i))=D_0(t_{m+1})$ and $\{D_0(\overline{e_{i-1}}), D_0(e_i) \} \neq \{D_0(e_1), D_0(e_1') \}$ (the illegal turn for $g$), return to V with $\rho_{2,m+1}$, where $e_{m+1}=e_i$.} \\[-7.5mm]
\end{description}
~\\
\vspace{-9mm}
\begin{description}
\item[(VI)] \emph{Rule out the other possible subpaths that arose via this procedure (by different choices of $d_i$, as in III or V). If there are no other possible subpaths, there are no ipNp's (thus no pNp's) for $g$.} \\[-4.5mm]
\end{description}

\hspace{.45in}\parbox{5.75in}{\fontsize{10pt}{12pt}\selectfont In discovering options for $e_2$ (after analyzing Case 1), VI sent us back to Case 2.} \newline

\vskip8pt

\begin{prop}{\label{P:NPIdentification}} Let $(g_0, \dots, g_n; G_0, \dots, G_n)$ be an $\mathcal{ID}$ admissible composition with notation as above. Then the procedure described in steps (I)-(VI) determines all ipNp's for $g= g_0 \circ \cdots \circ g_n$.
\end{prop}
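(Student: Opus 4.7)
The plan is to show the procedure enumerates exactly the legal path-pairs $(\rho_1,\rho_2)$ that can constitute an ipNp $\bar\rho_1\rho_2$ for $g$. The foundation is Lemma 3.4 of \cite{bf94}: since $g$ is an irreducible train track, any ipNp takes the form $\bar\alpha\beta$ with $\alpha,\beta$ legal paths meeting at the unique illegal turn of $g$, and satisfies $g_\#(\alpha)=\tau\alpha$, $g_\#(\beta)=\tau\beta$ for some common legal path $\tau$. Because $g$ is ideally decomposed and each generator $g_k$ is a proper full fold with unique illegal turn $T_{k-1}=\{d^{pu}_{k-1},d^{pa}_{k-1}\}$, the unique illegal turn of $g$ is $T_0=\{d^{pu}_0,d^{pa}_0\}$, so the initial edges $e_1,e_1'$ of $\rho_1,\rho_2$ are forced up to swapping the two sides.

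The first main step will be to establish completeness by induction on $k+l$: if $\bar\rho_1\rho_2$ is an ipNp, then every prefix pair $(\rho_{1,k},\rho_{2,l})$ arises as a candidate in the procedure. The inductive step exploits the Nielsen equation $g^p_\#(\bar\rho_1\rho_2)=\bar\rho_1\rho_2$ together with the fact that cancellation in $g^p(\bar\rho_1\rho_2)$ can only originate at $T_0$: once $g_{j,1}(e_1)$ and $g_{j,1}(e_1')$ agree on a common legal initial segment (detected in Step I), any surplus edge $t_{s+1}$ on the longer side must be cancelled by some later iterate, which forces the next edge of the shorter side to have initial direction mapping through $Dg_{j,1}$ into the illegal turn $T_j$ currently faced---exactly the enumeration carried out in Steps III(a) and III(b). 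Legality of $\rho_1,\rho_2$ excludes the choice that would recreate $T_0$, and so the branching in Step III captures every candidate for the next edge.

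Soundness is verified directly by Step IV(a): the equation $g^{p'}_\#(\bar\rho_{1,n}\rho_{2,m})=\bar\rho_{1,n}\rho_{2,m}$ is precisely the defining property of a periodic Nielsen path, and since the construction preserves legality of each side and maintains a single interior illegal turn, the resulting pNp is indivisible. Step IV(c) handles ipNps whose endpoints lie in edge interiors: strict containment $\bar\rho_{1,n}\rho_{2,m}\subsetneq g^{p'}_\#(\bar\rho_{1,n}\rho_{2,m})$ forces the last occurrence of $e_n$ (respectively $e_m'$) in the image to originate from $g^{p'}(e_n)$ (respectively $g^{p'}(e_m')$), yielding fixed interior points at which trimming produces the genuine ipNp. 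Steps II(c) and VI prune branches where the post-cancellation turn differs from $T_l$, so no further cancellation is possible on those branches.

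The main obstacle will be Step IV(d), the partial-overlap scenario in which neither $\bar\rho_{1,n}\rho_{2,m}$ nor its image contains the other but they share a central portion. Here one must verify that extending only on the underlong side as prescribed exhausts all remaining possibilities: any other modification introduces a discrepancy that persists under iteration and destroys the Nielsen equation. Termination finally follows because every application of Step I or Step III strictly decreases the length gap between $g_{j,1}(\rho_{1,s})$ and $g_{j,1}(\rho_{2,k})$ at the active cancellation stage, so only finitely many candidate pairs arise before each branch is either verified via IV(a), trimmed via IV(c), or discarded via II(c), IV(d), or VI.
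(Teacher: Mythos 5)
Your overall strategy is the same as the paper's: an ipNp must be $\bar\rho_1\rho_2$ with legal halves meeting at the unique illegal turn $T_0$, one pushes the two halves through the partial compositions $g_{j,1}$, and legality of the tightened images forces the enumeration of successive edges, with IV(c) handled by trimming at fixed points and II(c)/VI pruning dead branches. The problem is that at the places where an actual argument is required you substitute assertions. Most seriously, for IV(d) --- which you yourself flag as the main obstacle --- you offer only ``any other modification introduces a discrepancy that persists under iteration and destroys the Nielsen equation,'' which is not a proof. The paper's argument is concrete: if some extension $\bar\rho_{1,n+k}\rho_{2,m+l}\supset\bar\rho_{1,n}\rho_{2,m}$ were an ipNp of period $p$, then, because its two halves are legal, all cancellation in $g^{p}$ occurs inside the image of the central subpath, so $g^{p}_{\#}(\bar\rho_{1,n+k}\,\rho_{2,m+l}) = g^{p}(\overline{e_{n+k}}\cdots \overline{e_{n+1}})\, g^{p'}_{\#}(\bar\rho_{1,n}\,\rho_{2,m})\, g^{p}(e'_{m+1}\cdots e'_{m+l})$, whence $g^{p'}_{\#}(\bar\rho_{1,n}\,\rho_{2,m})\subset \bar\rho_{1,n+k}\,\rho_{2,m+l}$; this puts the prefix pair into one of the containment/overlap configurations assumed in IV(d) not to occur, a contradiction. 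Without this (or an equivalent) containment argument, completeness of the procedure is not established.

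Two further points. In Step III your stated justification --- that the next edge must have ``initial direction mapping through $Dg_{j,1}$ into the illegal turn $T_j$'' --- conflates the two cases; the dichotomy hinges on whether $D_0(t_{s+1})$ equals the unachieved direction $d^u_j$. Since $d^u_j$ is not in the image of $Dg_{j,1}$, in case III(a) no direction can map onto $D_0(t_{s+1})$, so the turn $\{Dg_{j,1}(d'_{s+1}), D_0(t_{s+1})\}$ must equal $T_j$ in order for the cancellation to happen later; in case III(b) the forced condition is instead the equality $Dg_{j,1}(d'_{s+1}) = D_0(t_{s+1})$. In both cases the engine is the legality contradiction (if the tightened image were legal, every further image of $\rho$ would be legal, contradicting $g^{p}_{\#}(\rho)=\rho$ containing an illegal turn), not the heuristic that ``the surplus edge must be cancelled by some later iterate.'' Finally, your termination claim --- that each application of I or III strictly decreases the length gap at the active cancellation stage --- is false as stated (adding an edge to the short side and then applying further folds can increase the gap) and is also unnecessary: the proposition, like the paper's proof, asserts only that the steps correctly identify all ipNps, not that the search halts.
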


\noindent We use the following lemma in the proposition proof (see \cite{p12a} for proofs).

\begin{lem} \cite{p12a}
~\\
\vspace{-6.25mm}
\begin{description}
\item[a.] Subpaths of legal paths are legal. \\[-6.5mm]
\item[b.] For train tracks, images of legal paths and turns are legal. \\[-5mm]
\end{description}
\end{lem}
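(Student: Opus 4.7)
The plan is to unwind the definitions, since both parts are essentially definitional. Recall that a path $\gamma = e_1 e_2 \cdots e_k$ is legal precisely when every turn it traverses, namely each $\{\overline{e_i}, e_{i+1}\}$ for $1 \le i < k$, is legal, and a turn $\{d_1, d_2\}$ is legal when $Dg^m(d_1) \ne Dg^m(d_2)$ for every $m \ge 0$. I will use these two characterizations throughout.

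For (a), I would observe that any subpath $e_i e_{i+1} \cdots e_j$ of $\gamma$ traverses only turns of the form $\{\overline{e_l}, e_{l+1}\}$ with $i \le l < j$, which is a subset of the turns traversed by $\gamma$. Since each of these is legal by hypothesis, the subpath is legal by definition. This is a one-line argument, with no obstacle.

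For (b), I would first handle turns and then bootstrap to paths. Suppose $\{d_1, d_2\}$ is legal. I want to show $D^t g(\{d_1, d_2\}) = \{Dg(d_1), Dg(d_2)\}$ is legal. If not, there is some $m \ge 0$ with $Dg^m(Dg(d_1)) = Dg^m(Dg(d_2))$, i.e.\ $Dg^{m+1}(d_1) = Dg^{m+1}(d_2)$, contradicting legality of $\{d_1, d_2\}$. For paths, I would write $g(\gamma) = g(e_1) g(e_2) \cdots g(e_k)$ and check all turns it traverses fall into two types: (i) turns internal to some $g(e_i)$, which are legal because $g$ is a train track (the train track condition is exactly that iterates of $g$ applied to a single edge produce a legal edge-path, so every turn crossed by $g(e_i)$ is legal); and (ii) the junction turns $D^t g(\{\overline{e_i}, e_{i+1}\})$, which are legal by the turn statement just proved together with legality of $\{\overline{e_i}, e_{i+1}\}$ (which holds by hypothesis on $\gamma$). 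Combining the two types gives that every turn of $g(\gamma)$ is legal, so $g(\gamma)$ is legal.

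The only subtle point, really, is making sure one quotes the correct form of the train track condition in (b)(i), namely that $g^m$ applied to any edge yields a legal path (equivalently, an immersed path) for all $m \ge 1$; this is what is taken as the working definition of a train track map from \cite{bh92}. There is no genuine obstacle here beyond keeping the definitions straight.
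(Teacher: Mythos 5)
Your proof is correct: part (a) follows since a subpath traverses only a subset of the turns of the original path, and part (b) follows from the identity $Dg^{m}(Dg(d))=Dg^{m+1}(d)$ for turns, together with splitting $g(\gamma)$ into turns internal to the edge images (legal by the train track condition) and junction turns (images of the legal turns of $\gamma$). Note that the paper itself gives no proof of this lemma, deferring to \cite{p12a}; your definitional argument is exactly the standard one expected there, so there is nothing to flag.
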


\noindent We also remind the reader:
~\\
\vspace{-6.25mm}
\begin{enumerate}
\item Since $d^u_k$ will be one vertex of the illegal turn $T_k$ of $G_k$, $T_k$ cannot also be a purple edge in $G_k$. \\[-6mm]
\item $d^u_k$ must be a vertex of the red edge $[t^R_k]$ of $G_k$. \\[-6mm]
\item For each $k$, $T_k$ is not the red edge in $G_k$, so is not represented by any edge in $G_k$. \\[-6mm]
\end{enumerate}

\vskip3pt

\begin{proof} \emph{(of Proposition)} We begin with an argument used repeatedly. Since $\rho=\overline{\rho_1} \rho_2$ is an ipNp, $\rho_1$ and $\rho_2$ are both legal. Since subpaths of legal paths are legal and since the $g_{k,1}$ images of legal paths are legal, $g_{k,1}(e_1 \dots e_l)$ and $g_{k,1}(e_1' \dots e'_{l'})$ are legal paths for each $1 \leq k \leq n$, $1 \leq l \leq m$, and $1 \leq l' \leq n'$.

Since $\{D_0(e_1), D_0(e_1')\}$ is illegal, $Dg_{j,1}(e_1')=Dg_{j,1}(e_1)$, for some $j$. We show either $g_{j,1}(e_1)$ is the initial subpath of $g_{l,1}(e_1')$ or vice versa. Let $d_1=D_0(e_1)$ and $d_1'=D_0(e_1')$. Since $\{D_0(e_1), D_0(e_1')\}$ is illegal for $g$ and $T_0=\{d^{pu}_0, d^{pa}_0\}$ is $g$'s only illegal turn, $\{d_1, d_1'\}=\{d^{pu}_0, d^{pa}_0\}$.  Without generality loss suppose $d_1=d^{pu}_0$ (and $d_1'=d^{pa}_0$). Since $g_1(e^{pu}_0) = e^a_1 e^u_1$, we have $g_1(e_1)= g_1(e^{pu}_0)= e^a_1e^u_1$ and $g_1(e_1')=g_1(e^{pa}_0)=e^a_1$. So $g_1(e_1')$ is the initial subpath of $g_1(e_1)$. Since $g_{j,2}$ is an automorphism, $g_{j,2}(g_1(e_1'))=g_{j,2}(e^a_1)$ is the initial subpath of $g_{j,2}(g_1(e_1)) = g_{j,2}(e^a_1 e^u_1) = g_{j,2}(e^a_1) g_{j,2}(e^u_1)$, as desired. Left to show for I is that $\rho_2$ must contain a second edge $e_2'$. Suppose it did not. Then tightening would cancel all of $g_{j,1}(\rho_2)$ with an initial subpath of $g_{j,1}(\rho_1)$, so would cancel all of $g_{j,1}(\rho_2)$ with an initial subpath of $g_{j,1}(\rho_1)$. Thus, $(g_{j,1})_{\#}(\rho)= (g_{j,1})_{\#}(\overline{\rho_1}\rho_2)$ would be a subpath of $g_{j,1}(\rho_2)$, hence would be legal, as would $g^p_{\#}(\rho)=(g^{p-1} \circ g_{n,j+1})_{\#}((g_{j,1})_{\#}(\rho))$ for all $p$. This contradicts that some $g^p_{\#}(\rho)=\rho$, which has an illegal turn.

We next show applying the $g_i$ results in IIa, IIb, or IIc. For each $l$, $k$, and $s$, $g_{l,1}(\rho_{2,k})$ and $g_{l,1}(\rho_{1,s})$ are both legal. Write $g_{l,1}(\rho_{2,k})= \tau'\gamma_{2,k}$ and $g_{l,1}(\rho_{1,s})= \tau'\gamma_{1,s}$ where $D_0(\gamma_{2,k}) \neq D_0(\gamma_{1,s})$. If $\{D_0(\gamma_{2,k}), D_0(\gamma_{1,s})\}$ is legal, we are in IIc. If it is illegal and we are not in IIb, we can continue to apply the $g_i$ until we are in IIb, are in IIc, or reach a $g^p$. Since the only illegal turn for any $g^p$ is $\{d_1, d_1'\}$, we would by necessity be in IIa.

We show for IIc: if $g_{l,1}\circ g^{p'}(\rho_{2,k}) = \tau' \gamma_{2,k}$ and $g_{l,1}\circ g^{p'}(\rho_{1,s})=\tau'\gamma_{1,s}$ where $\{D_0(\gamma_{1,s}), D_0(\gamma_{2,k})\}$ is legal for $g_l$, then adding edges to $\rho_{2,k}$ and $\rho_{1,s}$ cannot give an ipNp for $g$. Now $g_{l,1}\circ g^{p'}(\rho_2)=\tau' \gamma_2$ and $g_{l,1}\circ g^{p'}(\rho_{1})=\tau'\gamma_{1}$ where $\gamma_{2,k}$ is an initial subpath of $\gamma_2$ (both legal) and $\gamma_{1,s}$ is an initial subpath of $\gamma_{1}$ (both legal). Since $\{D_0(\gamma_{1}), D_0(\gamma_{2})\} = \{D_0(\gamma_{1,s}), D_0(\gamma_{2,k})\}$ is legal, $(g_{l,1}\circ g^{p'})_{\#}(\rho) = (g_{l,1}\circ g^{p'})_{\#}(\overline{\rho_{1}} \rho_2) = \bar\gamma_1\gamma_2$, which is a legal path. Let $p$ be such that $g^p_{\#}(\rho)=\rho$. (Without generality loss assume $p > p'$, by replacing $p$ by an integer multiple of $p$ if necessary). Then, $g^p_{\#}(\rho)= ((g^{p-p'-1} \circ g_{n, l+1}) \circ (g_{l,1} \circ g^{p'}))_{\#}(\rho)=  (g^{p-p'-1} \circ g_{n, l+1})_{\#} ((g_{l,1} \circ g^{p'})_{\#}(\rho))= (g^{p-p'-1} \circ g_{n,l})_{\#}(\overline{\gamma_1}\gamma_2)= (g^{p-p'-1} \circ g_{n,l})(\overline{\gamma_1}\gamma_2)$, since $\overline{\gamma_1}\gamma_2$ is a legal path. So $g^p_{\#}(\rho)$ is legal, as images under admissible compositions of legal paths are legal. This contradicts that $g^p_{\#}(\rho)=\rho$ is not legal. We have verified all in II needing verification.

We prove the claims of IIIa and IIIb: $g_{j,1}(\rho_{1,k})= g_{j,1}(\rho_{2,s})t_{s+1} \dots$ implies $(g_{j,1})_{\#}(\overline{\rho_{1,k}} \rho_{2,s}) = \dots \overline{t_{s+1}} = \overline{\gamma}$ for a legal path $\gamma$ (legal, since a subpath of a legal path). Since $e_{s+1}'$ is a legal path, $g_{j,1}(e_{s+1}')$ is also. Thus, $(g_{j,1})_{\#}(\overline{\rho_{1,k}} \rho_{2,s}) = (\overline{\gamma} g_{j,1}(e_{s+1}'))_{\#}$, which is $\overline{\gamma} g_{j,1}(e_{s+1}')$ unless $Dg_{j,1}(d_{s+1}')= D_0(t_{s+1})$. It is then legal (causing a contradiction) unless  $\{D_0(\gamma), D_0(g_{j,1}(e_{s+1}')) \}$ is illegal, i.e. $T_j=\{Dg_{j,1}(D_0(e_{s+1}')),D_0( t_{s+1}) \}$.

Suppose, as in IIIa, that $D_0(t_{s+1})=d^u_j$. Then, $D_0(t_{s+1})$ is not in the image of $Dg_j$, thus is not in the image of $Dg_{j,1}=D(g_j \circ g_{j-1,1})$. So $Dg_{j,1}(d_{s+1}') \neq D_0( t_{s+1})$. This implies $(\overline{\gamma} g_{j,1}(e_{s+1}'))_{\#} = \overline{\gamma} g_{j,1}(e_{s+1}')$, which will be a legal path unless $T_j=\{Dg_{j,1}(d_{s+1}'),D_0( t_{s+1})\}$. However, if $(g_{j,1})_{\#}(\overline{\rho_{1,k}} \rho_{2,s}$) $= \overline{\gamma} g_{j,1}(e_{s+1}')$, then $(g_{j,1})_{\#}(\rho) = (g_{j,1})_{\#}(\overline{\rho_1} \rho_2)= (g_{j,1})_{\#} (\overline{e_m} \dots \overline{e_{k+1}})\overline{\gamma}g_{j,1}(e_{s+1}') (g_{j,1})_{\#}(e_{s+2}'\dots e_m')=$ \newline \noindent $g_{j,1}(\overline{e_m} \dots \overline{e_{k+1}}) \overline{\gamma} g_{j,1}(e_{s+1}') g_{j,1}(e_{s+2}' \dots e_m')$, since $\rho_1$ and $\rho_2$ are legal and edge images are legal. But $g_{j,1}(\overline{e_m}\dots\overline{e_{k+1}}) \overline{\gamma}$ is a subpath of $g_{j,1}(\overline{\rho_{1}})$, so is legal, and $g_{j,1}(e_{s+1}') g_{j,1}(e_{s+2}' \dots e_m')$ is a subpath of $g_{j,1}(\rho_2)$, so is legal, and we still have $\overline{\gamma} g_{j,1}(e_{s+1}')$ is legal, together making $(g_{j,1})_{\#}(\rho)$ legal. This contradicts that some $g^p_{\#}(\rho)$ $= (g^{p-1}\circ g_{j,n+1})_{\#} (g_{j,1})_{\#}(\rho))$ must be $\rho$, which has an illegal turn. So, $T_j = \{Dg_{j,1}(d_{s+1}'), D_0(t_{s+1})\}$, as desired.

Suppose, as in IIIb, $D_0(t_{s+1}) \neq d^u_j$. For contradiction's sake suppose $Dg_{j,1}(d_{s+1}') \neq D_0(t_{s+1})$, where $D_0(e_{s+1}') = d_{s+1}'$. Again $(\overline{\gamma} g_{j,1}(e_{s+1}'))_{\#} = \overline{\gamma} g_{j,1}(e_{s+1}')$. Also, $Dg_{j,1}(d_{s+1}')$ cannot be $d^u_j$ (as $d^u_j \notin \mathcal{IM}(Dg_j)$) and $D_0(t_{s+1}) \neq d^u_j$, implying $T_j \neq \{Dg_{j,1}(d_{s+1}'),D_0( t_{s+1}) \}$. Then, since $Dg_{j,1}(d_{s+1}') \neq D_0(t_{s+1})$, that $\overline{\gamma} g_{j,1}(e_{s+1}')$ is legal, causes a contradiction as above. So $Dg_{j,1}(d_{s+1}') = D_0(t_{s+1})$, as desired. Finally observe that choices for $e_{s+1}'$ with $T_0= \{D_0(e_{s}'), D_0(e_{s+1}')\}$ must be thrown out, as $\rho_2$ must be legal.

As in IV, suppose $g_{l,1} \circ  g^{p'}(\rho_{2,m})=\tau'e_1'\dots$ and $g_{l,1} \circ  g^{p'}(\rho_{1,n}) = \tau' e_1\dots$ for a legal path $\tau'$ (and appropriate $m$ and $n$). It is clear these are the only options. Also, Va is true by definition, Vb refers us to a later step, and IVc is fairly straight-forward. So we focus on IVd. We first need that there is only possibility for $\overline{\rho_{1,n}}\rho_{2,m}$ to be a subpath of an ipNp. Suppose, for no power $p'$, is $g^{p'}_{\#}(\overline{\rho_{1,n}}\rho_{2,m}) = \overline{\rho_{1,n}}\rho_{2,m}$ or $g^{p'}_{\#}(\overline{\rho_{1,n}}\rho_{2,m}) \subset \overline{\rho_{1,n}}\rho_{2,m}$ or $\overline{\rho_{1,n}}\rho_{2,m} \subset g^{p'}_{\#}(\overline{\rho_{1,n}}\rho_{2,m})$ or $g^{p'}_{\#}(\overline{\rho_{1,n}}\rho_{2,m}) =\overline{\gamma_{1,n}}\gamma_{2,m}$ where either $\gamma_{1,n} \subset \rho_{1,n}$ and $\rho_{2,m} \subset \gamma_{2,m}$ or $\gamma_{2,m} \subset \rho_{2,m}$ and $\rho_{1,n} \subset \gamma_{1,n}$. For contradiction's sake, suppose some $\overline{\rho_{1,n+k}}\rho_{2,m+l}$ containing $\overline{\rho_{1,n}}\rho_{2,m}$ is a period-$p$ ipNp. Since $\overline{\rho_{1,n+k}}\rho_{2,m+l}$ is an ipNp, $\rho_{1,n+k}$ and $\rho_{2,m+l}$ are both legal (as are $\rho_{1,n}$ and $\rho_{2,m}$). Thus, $g^{p}_{\#}(\overline{\rho_{1,n+k}}\rho_{2,m+l})= g^{p}(\overline{e_{n+k}'} \dots \overline{e_{n+1}})g^{p'}_{\#}(\overline{\rho_{1,n}}\rho_{2,m})g^{p}(e_{m+1}\dots e_{m+l})$. So $g^{p'}_{\#}(\overline{\rho_{1,n}}\rho_{2,m}) \subset g^{p}_{\#}(\overline{\rho_{1,n+k}}\rho_{2,m+l}) = \overline{\rho_{1,n+k}}\rho_{2,m+l}$. We are in a scenario we said could not occur, a contradiction.

There is nothing to prove in V since the conditions for IV still hold. \qedhere
\end{proof}

\begin{df}
An admissible composition $(g_1, \dots, g_k; G_0, \dots, G_k)$ will be a \emph{Nielsen path prevention sequence} if no $\mathcal{ID}$ map $(g_1', \dots, g_{k+m}'; G_0', \dots, G_{k+m}')$, with $(g_i; G_{i-1}, G_i) \sim (g_i'; G_{i-1}', G_i')$ for each $1 \leq i \leq k$, can have a pNp.
\end{df}

\begin{lem}{\label{L:NPKilling}} $G_0 \xrightarrow{g_1: z \mapsto xz} G_1 \xrightarrow{g_2: w \mapsto zw} G_2 \xrightarrow{g_3: y \mapsto y \bar w} G_3 \xrightarrow{g_4: y \mapsto y \bar x} G_4 \xrightarrow{g_5: y \mapsto y \bar w} G_5 \xrightarrow{g_6: y \mapsto y \bar x} G_6$
is a Nielsen path prevention sequence where each $\mathcal{PI}(G_i)$ is the complete ($2r-1$)-vertex graph and, listed red vertex first:
$e^R_0=[x, y]$,
$e^R_1=[z, \bar x]$,
$e^R_2=[w, \bar z]$,
$e^R_3=[\bar y, \bar w]$,
$e^R_4=[\bar y, \bar x]$,
$e^R_5=[\bar y, \bar w]$,
$e^R_6=[\bar y, \bar x]$.
\end{lem}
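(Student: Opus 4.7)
The plan is to apply the periodic-Nielsen-path identification procedure of Proposition \ref{P:NPIdentification} to any hypothetical ideally decomposed representative whose first six folds agree (up to $\sim$) with the given sequence. From the red edge $e^R_0=[x,y]$ together with $g_1:z\mapsto xz$, I first read off the unique illegal turn of $g$ in $\Gamma_0$ as $T_0=\{d^{pu}_0,d^{pa}_0\}$, with $d^{pa}_0=d^u_0=D_0(x)$; the red edges $e^R_1,\dots,e^R_5$ similarly determine each $T_j$ for $j=1,\dots,5$. Since every pNp is a concatenation of ipNps, it suffices to rule out all ipNps $\bar\rho_1\rho_2$, whose initial directions $D_0(e_1),D_0(e_1')$ must lie in this common illegal turn and are thus pinned down.

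Next I would execute Steps I--VI of the procedure in order, walking through $g_1,g_2,g_3,g_4,g_5,g_6$. Step I on $e_1,e_1'$ compares $g_1(z)=xz$ with $g_1(x)=x$ and forces a second edge $e_2'$ onto the shorter side. At each subsequent fold, Step III restricts the next edge to add by requiring its direction to be a preimage of $d^u_j$ (case IIIa, using $T_j$) or, failing that, of the surviving tail direction (case IIIb); each admissible choice spawns a sub-branch. The aim is a depth-six, finite enumeration: every surviving branch either adds a uniquely forced edge and is carried to the next step, or is killed by Step IIc when the residual turn at the tails of $\rho_1$ and $\rho_2$ fails to equal any available $T_j$.

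The reason this particular sequence succeeds is the design of $g_3,g_4,g_5,g_6$: the alternation of $y$-twists by $\bar w$ and $\bar x$ means that after $g_3\to g_4$ both tails of a would-be ipNp are forced into one required pattern of appended edges, while $g_5\to g_6$ then forces a conflicting pattern, so Step IIc must close every branch by at latest the application of $g_6$. The main obstacle is the bookkeeping: Step IIIb in particular can admit several preimage choices at a given fold, so the tree of surviving candidates $(\rho_{1,k},\rho_{2,l})$ must be enumerated exhaustively in the spirit of Example \ref{E:NPIdentification}, alongside a routine verification that each $(g_i,G_{i-1},G_i)$ is an admissible switch or extension compatible with the listed $e^R_i$. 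Once every branch is closed, the Nielsen-path-prevention property follows directly from its definition, since the equivalence $\sim$ guarantees that the first six folds of any admissible extension produce exactly the procedure inputs just analyzed.
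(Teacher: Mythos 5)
Your proposal follows the same route as the paper's proof---run the Proposition~\ref{P:NPIdentification} procedure starting from the unique illegal turn $\{x,z\}$ of $g_1$ and enumerate forced edges---but it stops at the level of a plan. For this lemma the enumeration \emph{is} the proof: there is no content beyond the explicit verification that every branch of forced candidates $(\rho_{1,k},\rho_{2,l})$ dies, so the sentence ``Step IIc must close every branch by at latest the application of $g_6$'' asserts exactly what has to be shown. The paper carries the enumeration out edge by edge ($e_2=w$ and $e_2'=\bar y$ forced, then $e_3\in\{x,z\}$, with each branch analyzed against the relevant $T_j$); your write-up performs none of these computations and gives no mechanism for bounding the branching beyond an appeal to the alternation of the $y$-twists.

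Moreover, the step you defer is not routine bookkeeping. The branch $e_3=z$ does die by IIc at the $g_{4,1}$ stage (residual turn $\{z,\bar y\}\neq T_4=\{\bar y,w\}$). But in the branch $e_3=x$ one is forced to $e_4=\bar y$, and with the convention $g_{k,1}=g_k\circ\cdots\circ g_1$ one computes $g_{5,1}(\bar y)=wxw\bar y$, hence $g_{5,1}(xwx\bar y)=xzwxwxw\bar y$ while $g_{5,1}(z\bar y)=xzwxw\bar y$: the residual turn is $\{x,\bar y\}$, which is precisely $T_5$, the illegal turn of $g_6$, so IIc does \emph{not} close this branch at the fifth stage; after applying $g_6$ the residual turn is $\{w,\bar y\}$, which contains the red direction $\bar y=d^u_6$ and so could equal $T_6$ for a legitimate continuation (e.g.\ $g_7'\colon \bar y\mapsto w\bar y$). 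Since the definition of a Nielsen path prevention sequence quantifies over \emph{all} admissible continuations $(g_7',\dots,g_{6+m}')$, closing this branch needs an additional argument, not just the six listed folds. (Note that the paper's own treatment of this branch rests on the value $g_{5,1}(xwx\bar y)=xzwxxw\bar y$, which is inconsistent with its own $g_{5,1}(z\bar y)=xzwxw\bar y$; this is exactly the delicate point your proposal waves away.) So the gap is concrete: you must actually perform the depth-six enumeration, and in the $e_3=x$ branch you must either justify a stage-five termination by a correct computation or supply an argument ruling out an ipNp under every admissible continuation.
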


\begin{proof}
Suppose $\rho$ were an ipNp for $g$. Then $\rho$ would have to contain the illegal turn $\{x, z\}$ and could be written $\overline{\rho_1}\rho_2$ where $\rho_1=xe_2e_3\dots$ and $\rho_2=ze_2'e_3'\dots$ are legal. Now, $g_1(x)=x$ and $g_1(z)=xz$. So $\rho_1$ would contain an additional edge $e_2$. Also, $dg_1(e_2)=w$. So $e_2=w$. Since $g_{2,1}(xw)=xzw$ and $g_{2,1}(z)=xz$, we know $\rho_2$ would contain an additional edge $e_2'$ with $dg_{2,1}(e_2')=\bar y$. The only option is $e_2'=\bar y$. Since $g_{3,1}(xw)=xzw$ and $g_{3,1}(z\bar y)=xzw\bar y$, also $\rho_1$ would contain an additional edge $e_3$ with $dg_{3,1}(e_3)=x$. So either $e_3=x$ or $e_3=z$. Suppose $e_3=x$. Since $g_{4,1}(xwx)=xzwx$ and $g_{4,1}(z\bar y)=xzwx\bar y$, we know $\rho_1$ would contain an edge $e_4$ with $dg_{4,1}(e_4)=w$. So $e_4=\bar y$. Note that $g_{5,1}(xwx\bar y)=xzwxxw\bar y$ and $g_{5,1}(z\bar y)=xzwxw\bar y$ both start with $xzwx$ and $\{x, w\}$ is not the illegal turn for $g_6$. So we must return to the case of $e_3=z$. Since $g_{4,1}(xwz)=xzwxz$ and $g_{4,1}(z\bar y)=xzwx\bar y$ and $\{\bar y, z\}$ is not the illegal turn for $g_5$, we have exhausted all possibilities for $\rho$. Thus, $g$ was not an ipNp. \qedhere
\end{proof}

\section{Achieving all ($2r-1$)-vertex complete graphs}{\label{Ch:Achievable}}

Denote by $\mathcal{C}_r$ the complete ($2r-1$)-vertex graph. We construct (Theorem \ref{T:MainTheorem}), for each $r \geq 3$, a $\phi \in \mathcal{AFI}_r$, such that $\mathcal{C}_r \cong \mathcal{IW}(\phi)$. First we establish notation and prove a lemma.

For each $r \geq 3$ and $1 \leq j \leq 2r-1$, color $\mathcal{C}_r$ purple and let $\mathcal{G}_j$ be the graph formed from $\mathcal{C}_r$ by: \newline
\noindent 1. index edge-pair labeling $\mathcal{C}_r$'s vertices with $\{x_1, x_2, \dots, x_{2r-1} \}$, writing $X_i$ for $x_{2i-1}$ and $\overline{X_i}$ for $x_{2i}$;  \newline
\noindent 2. adding a black edge $[x_i, \overline{x_i}]$ for each $1 \leq i \leq 2r-1$ (leaving vertices $x_i$ and $\overline{x_i}$ purple); and \newline
\noindent 3. adding a single red vertex $x_{2r}$ and single red edge $[x_{2r}, x_j]$ ($x_j$ remains purple).

Note that, for $j = 2r-1$, $\mathcal{G}_j$ contains as a subgraph the graph (we denote $\mathcal{G}_{1, \dots, 2r-2}$) consisting of the (purple) complete graph on the vertices $x_1, \dots, x_{2r-2}$, together with the black edges $[X_l, \overline{X_l}]$ for $1 \leq j \leq r-1$.  Hence, for each $1 \leq i,k \leq 2r-2$ where $i \neq k$, $\mathcal{G}_j$ also contains:
\begin{enumerate}
\item the locally smoothly embedded loop $L_{i,k}$ := $[x_i, \overline{x_k}, x_k, \overline{x_k}, x_k, \overline{x_i}, x_i, \overline{x_i}, x_i, x_k, \overline{x_k}, \overline{x_i}, x_i],$
starting with the purple edge $[x_i, \overline{x_k}]$ and ending with the black edge $[\overline{x_i}, x_i]$,
~\\
\vspace{-7.25mm}
\begin{figure}[H]
\centering
\includegraphics[width=1.8in]{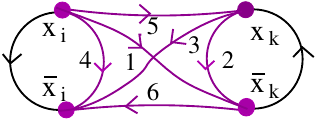}
\label{fig:ConstructionLoop}
\end{figure}
~\\
\vspace{-16mm}
\item and the smooth path $T_{i,k}$ := $[x_i, \overline{x_k}, x_k]$, traversing the purple edge $[x_i, \overline{x_k}]$, then black edge $[\overline{x_k}, x_k]$.
\end{enumerate}

\bigskip

\begin{lem}{\label{L:BirecurrentCompleteGraphltts}}
For each $1 \leq j \leq 2r-2$, $\mathcal{G}_j$ is an admissible $(r;(\frac{3}{2}-r))$ ltt structure.
\end{lem}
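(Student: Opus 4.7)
The plan is to separate the two requirements: checking the structural axioms ltt1--ltt(*)4 directly from the construction, and then establishing admissibility by exhibiting a closed smoothly embedded cycle in $\mathcal{G}_j$ that traverses every edge and iterating it bi-infinitely.

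The structural axioms are immediate from the definition of $\mathcal{G}_j$. Step (1) labels the $2r-1$ vertices of $\mathcal{C}_r$ as purple, and step (3) supplies the unique red vertex $x_{2r}$ and the unique red edge $[x_{2r},x_j]$. This yields ltt1, the vertex count in ltt(*)4, and the uniqueness of the red edge and red vertex. Step (2) gives one black edge per edge pair $\{x_i,\overline{x_i}\}$, so the black clause of ltt2 holds; the other edges of $\mathcal{C}_r$ are purple with both endpoints purple, giving the purple clause; and the edge $[x_{2r},x_j]$ has one red endpoint, giving the red clause. Since $\mathcal{C}_r$ is simplicial there are no two parallel purple edges, and $x_{2r}$ touches no purple edge because it is red, so ltt3 holds.

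For admissibility I construct a closed smoothly embedded cycle $\gamma$ in $\mathcal{G}_j$ using every edge; iterating $\gamma$ in both directions along $\mathbf{R}$ then supplies a locally smoothly embedded bi-infinite line through every edge infinitely often. The cycle assembles from three types of pieces. First, the loops $L_{i,k}$ for all $1\le i\ne k\le 2r-2$ collectively traverse every purple edge of the complete subgraph on $\{x_1,\dots,x_{2r-2}\}$ and every black edge $[X_l,\overline{X_l}]$ with $l\le r-1$. Second, a ``red excursion'' of the form
\[
 \dots\,[\overline{x_l},x_{2r-1}]\,[x_{2r-1},x_{2r}]\,[x_{2r},x_j]\,[x_j,\overline{x_j}]\,\dots
\]
alternates purple--black--red--black and so covers the red edge, the remaining black edge $[x_{2r-1},x_{2r}]$, and one purple edge incident to $x_{2r-1}$; running this excursion with $l$ ranging over $\{1,\dots,2r-2\}$ covers every purple edge at $x_{2r-1}$. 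Third, short $T_{i,k}$-style bridges transition smoothly between pieces starting at different base purple vertices. Because the purple subgraph is complete, at any purple vertex $x_i$ a black-ending subpath can be followed smoothly by any purple-starting subpath, so the concatenation remains smooth throughout.

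The main obstacle is purely combinatorial: explicitly sequencing these pieces so that the result (i) closes into one smooth cycle and (ii) touches every edge, including every purple edge at $x_{2r-1}$ and the red edge. Given the density of purple edges at each vertex this verification reduces to routine bookkeeping at junctions and edge-counting, and once $\gamma$ is in hand admissibility follows by bi-infinite iteration.
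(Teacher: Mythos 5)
Your proposal follows essentially the same route as the paper: the ltt axioms are checked directly from the construction, and admissibility is obtained by exhibiting a smooth closed loop traversing every edge of $\mathcal{G}_j$ (assembled from the loops $L_{i,k}$, the bridges $T_{i,k}$, and excursions through the black edge $[x_{2r-1},x_{2r}]$ and the red edge $[x_{2r},x_j]$) and iterating it bi-infinitely; the paper's $\gamma=\gamma_1\ast\gamma_2$ is exactly such an assembly, carried out for a single $j$ after invoking EPP-invariance. The only difference is that you leave the final explicit sequencing as ``routine bookkeeping'' where the paper writes the loop out (and you handle arbitrary $j$ directly rather than reducing by an EP permutation); since your pieces, coverage claims, and the smooth-splicing argument at purple vertices are all correct, this is a matter of completeness rather than a substantive gap.
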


\begin{proof} The only step not following immediately from the definition and construction, is that $\mathcal{G}_j$ is birecurrent, thus admissible. To prove birecurrency we construct a locally smoothly embedded loop $\gamma$ traversing each edge of $\mathcal{G}_j$. Repeating $\gamma^j$ gives the locally smoothly embedded (periodic) line proving birecurrency. Since birecurrency is EPP-invariant, we assume $j=2r-2$.

Let $\gamma_1$ be the following smooth path traversing all edges of $\mathcal{G}_{1, \dots, 2r-2}$:
~\\
\vspace{-1mm}
$$L_{1,3} \ast L_{1,5} \ast \cdots \ast L_{1,2r-3} \ast T_{1,3} \ast L_{3,5} \ast \cdots \ast L_{3,2r-3} \ast T_{3,5} \ast L_{5,7} \ast \cdots \ast L_{5,2r-3} \ast \cdots \ast T_{2l-1,2l+1}\ast$$
$$L_{2l+1,2l+3} \ast \cdots \ast L_{2l+1,2r-3} \ast \cdots \ast  T_{2r-7,2r-5} \ast L_{2r-5,2r-3} \ast T_{2r-5,1}.$$
Remaining are the purple edges $[x_{2r-1}, x_i]$ for each $1 \leq i \leq 2r-2$, the red edge $[x_{2r}, x_j]$, and black edge $[x_{2r-1}, x_{2r}]$. $\gamma^j= \gamma_1\ast\gamma_2$, where $\gamma_2$ is a concatenation of the loops $[x_1, x_{2r}, x_{2r-1}, \overline{x_i}, x_i, x_2, x_1]$
~\\
\vspace{-7mm}
\begin{figure}[H]
\centering
\includegraphics[width=1.5in]{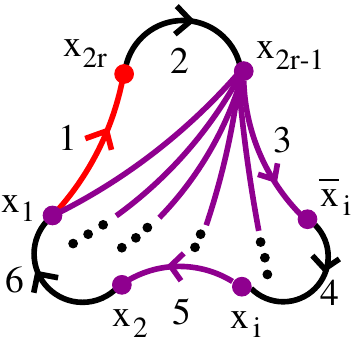}
\label{fig:ConstructionLoop} \\[-3mm]
\end{figure}

\noindent for $3\leq i\leq 2r-2$, together with the path $[x_1, x_{2r-1}, x_{2r}, x_1, x_{2r-3}, x_{2r-2}, x_2, x_1]$ to return to $x_1$:
$$\gamma_2=[x_1, x_{2r}, x_{2r-1}, \overline{x_3}, x_3, x_2, x_1] \ast \cdots \ast [x_1, x_{2r}, x_{2r-1}, \overline{x_i}, x_i, x_2, x_1] \ast \cdots$$
$$\cdots \ast [x_1, x_{2r}, x_{2r-1}, \overline{x_{2r-2}}, x_{2r-2}, x_2, x_1]\ast [x_1, x_{2r}, x_{2r-1}, x_2, x_1] \ast [x_1, x_{2r-1}, x_{2r}, x_1, x_{2r-3}, x_{2r-2}, x_2, x_1].$$

\end{proof}

\begin{thm}{\label{T:MainTheorem}} Let $\mathcal{C}_r$ denote the complete ($2r-1$)-vertex graph. For each $r \geq 3$, there exists an ageometric, fully irreducible $\phi \in Out(F_r)$ such that $\mathcal{C}_r$ is the ideal Whitehead graph $\mathcal{IW}(\phi)$ for $\phi$. \end{thm}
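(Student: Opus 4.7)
The plan is to exhibit, for each $r \geq 3$, an ideally decomposed train track $g : \Gamma \to \Gamma$ on the $r$-petaled rose whose associated sequence of ltt structures $G_0, G_1, \dots, G_N = G_0$ consists entirely of graphs $\mathcal{G}_j$ of the form described just before Lemma \ref{L:BirecurrentCompleteGraphltts}. Since each $\mathcal{PI}(\mathcal{G}_j) = \mathcal{C}_r$, one immediately gets $\mathcal{SW}(g;v) = \mathcal{PI}(G(g)) = \mathcal{C}_r$. With that in hand, the theorem reduces to verifying the three hypotheses of Lemma \ref{P:RepresentativeLoops}, after which the conclusion $\phi \in \mathcal{AFI}_r$ with $\mathcal{IW}(\phi) \cong \mathcal{C}_r$ is automatic.

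The decomposition is assembled by concatenating three kinds of blocks. First, a sequence of admissible construction compositions, one for each purple edge $[x_i,\overline{x_k}]$ of $\mathcal{C}_r$, chosen so that its construction path traverses that edge; the natural candidates are the loops $L_{i,k}$ and the transitional path $T_{i,k}$ from the remark preceding Lemma \ref{L:BirecurrentCompleteGraphltts}, both of which lie in the construction subgraph of the appropriate $\mathcal{G}_j$ and can therefore be realized as construction paths via Lemma \ref{L:ConstructionAutomorsphismFromPath}. By Lemma \ref{P:PathConstruction}, each such purple edge is then forced into the ideal Whitehead graph, so $\mathcal{C}_r \subseteq \mathcal{IW}(\phi)$. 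Second, a switch sequence interleaving these construction blocks, arranged so that each of the $2r$ edge labels appears as the unachieved direction of some $G_{k-1}$ and so that the purified part of each construction composition starts and ends at the same ltt structure; Lemma \ref{L:SwitchSequence} then guarantees the transition matrix of $g$ is Perron--Frobenius. Third, the Nielsen path prevention sequence of Lemma \ref{L:NPKilling} is spliced in, so that the pNp-identification procedure of Proposition \ref{P:NPIdentification} (applied as in Example \ref{E:NPIdentification}) rules out every potential indivisible periodic Nielsen path, yielding hypothesis (3) of Lemma \ref{P:RepresentativeLoops}.

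Verification then proceeds in three checks. Admissibility of every $G_k$ follows from Lemma \ref{L:BirecurrentCompleteGraphltts}, since each $G_k$ is EPP-isomorphic to some $\mathcal{G}_j$. Each triple $(g_k, G_{k-1}, G_k)$ is by construction either an admissible extension or admissible switch, so the entire concatenation gives a loop in the sense of the remark following Lemma \ref{P:RepresentativeLoops}. Condition (1) of that lemma holds because $\mathcal{PI}(G_0) = \mathcal{C}_r$; condition (2) is the Perron--Frobenius consequence of the interleaved switch sequence; and condition (3) is the no-pNp conclusion from the Nielsen path prevention block. Invoking Lemma \ref{P:RepresentativeLoops} then produces the desired $\phi \in \mathcal{AFI}_r$ with $\mathcal{IW}(\phi) \cong \mathcal{C}_r$.

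The main obstacle is the combinatorial bookkeeping required to make the three blocks compatible: exhibiting enough construction paths in the various $\mathcal{G}_j$'s so that together they cover every purple edge of $\mathcal{C}_r$ while each individual path still lies in the construction subgraph of its destination; ensuring the intervening switches satisfy condition (SS2) of a switch sequence and cycle through every edge label; and splicing in the prevention sequence of Lemma \ref{L:NPKilling} without disturbing either the coverage of purple edges or the switch sequence that drives the Perron--Frobenius property. Once this bookkeeping is performed explicitly (rank by rank, or uniformly by a recipe such as walking around the loops $L_{i,k}$ then applying $g_1$ through $g_6$ of Lemma \ref{L:NPKilling}), Lemma \ref{P:RepresentativeLoops} packages everything into the statement of the theorem with no further work.
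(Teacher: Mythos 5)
Your outline follows the paper's own strategy almost exactly: cover the purple edges of $\mathcal{C}_r$ by construction paths built from the $L_{i,k}$ and $T_{i,k}$ (Lemmas \ref{L:ConstructionAutomorsphismFromPath}, \ref{P:PathConstruction}), interleave switches so that Lemma \ref{L:SwitchSequence} gives a Perron--Frobenius transition matrix, append the prevention sequence of Lemma \ref{L:NPKilling}, check admissibility via Lemma \ref{L:BirecurrentCompleteGraphltts}, and package everything with Lemma \ref{P:RepresentativeLoops}. The problem is that what you defer as ``combinatorial bookkeeping'' is precisely the content of the proof of an existence theorem of this kind. The paper does not merely assert that compatible blocks exist; it exhibits them: an explicit loop $\gamma_I$ in $\mathcal{G}_1$ covering all purple edges among $x_1,\dots,x_{2r-2}$, a second loop $\gamma_{II}$ covering the edges $[x_{2r-1},x_i]$ except $[x_1,x_{2r-1}]$, a short third loop for $[x_1,x_{2r-1}]$, an explicit switch sequence $g_7,\dots,g_{r+5}$ with all red vertices and red edges listed (so that (SS2), the hypotheses of Lemma \ref{L:SwitchSequence}, and admissibility of every intermediate structure can actually be checked), and an explicit instantiation $x_5\mapsto x_{2r}x_5,\dots,x_2\mapsto x_{2r}x_2$ of the prevention sequence whose red edges match where it is inserted. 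Without producing such data, your argument shows only that \emph{if} a decomposition with all the stated compatibilities exists, then the theorem follows --- which is a plan, not a proof; it is not evident a priori that one can simultaneously realize every purple edge in construction subgraphs, keep every $G_k$ an admissible $(r;(\frac{3}{2}-r))$ structure for $\mathcal{C}_r$, satisfy (SS2) and hypothesis (2) of Lemma \ref{L:SwitchSequence}, and splice in Lemma \ref{L:NPKilling} consistently.

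Two further concrete points. First, Lemma \ref{L:NPKilling} is stated for a prevention sequence occurring as the \emph{initial} segment $(g_1,\dots,g_6)$ of the ideal decomposition (with prescribed red edges $e^R_0,\dots,e^R_6$), so ``splicing it in'' somewhere is not automatically legitimate; the paper places it exactly at the start, with labels chosen so its red edges agree with the adjacent blocks. Second, your uniform recipe cannot cover $r=3$: the loops $L_{i,k}$ and the concatenations $\gamma_I,\gamma_{II}$ degenerate when $2r-3=3$, and the paper accordingly treats rank three separately with an explicit representative $a \mapsto aba\bar b aac\bar b\cdots$, $b\mapsto\cdots$, $c\mapsto aba\bar b aac$ and its depicted ideal decomposition. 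As written, your proposal neither supplies the explicit data for $r>3$ nor addresses the rank-three case at all, so the existence claim remains unproved.
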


\begin{proof} We first give the representative $g$ with $\mathcal{IW}(g) \cong \mathcal{C}_3$, as this case is slightly different (ideal decomposition depicted below):
~\\
\vspace{-3mm}
$$
g =
\begin{cases} a \mapsto aba \bar{b} aac \bar{b} aba \bar{b} aacbaba \bar{b} aacaba \bar{b} aac \bar{b} a  \\
b \mapsto baba \bar{b} aac \bar{a} b \bar{c} \bar{a} \bar{a} b \bar{a} \bar{b} \bar{a} \bar{c} \bar{a} \bar{a} b \bar{a} \bar{b} \bar{a} \bar{b} \bar{c} \bar{a} \bar{a} b \bar{a} \bar{b} \bar{a} b  \\
c \mapsto aba \bar{b} aac
\end{cases}
$$
~\\
\vspace{-10mm}
\noindent \begin{figure}[H]
\centering
\includegraphics[width=6in]{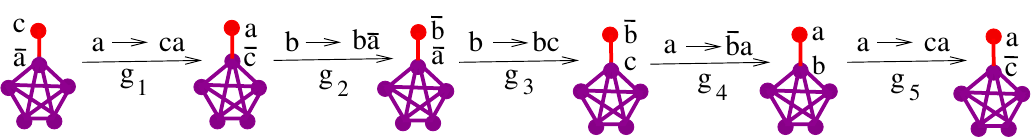}
\label{fig:CompleteGraph1}
\end{figure}
~\\
\vspace{-21mm}
\noindent \begin{figure}[H]
\centering
\includegraphics[width=6.5in]{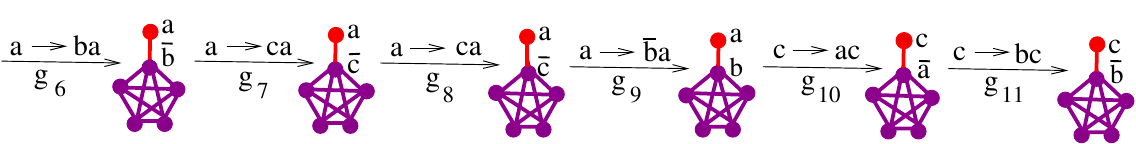}
\label{fig:CompleteGraph2}
\end{figure}
~\\
\vspace{-21mm}
\noindent \begin{figure}[H]
\centering
\includegraphics[width=5.6in]{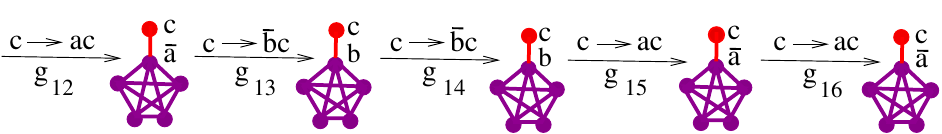}
\label{fig:CompleteGraph3} \\[-3mm]
\end{figure}

Suppose $r>3$. Let $\gamma_I$ be the construction loop in $\mathcal{G}_1$ starting with purple edge $[x_2, \overline{x_3}]=[x_2, x_4]$ and ending with purple edge $[x_{2r-2}, \overline{x_2}]=[x_{2r-2}, x_1]$ and black edge $[\overline{x_2}, x_2]=[x_1, x_2]$:
~\\
\vspace{-1mm}
$$L_{2,3} \ast L_{2,5} \ast \cdots \ast L_{2,2r-3} \ast T_{2,4} \ast L_{4,5} \ast L_{4,7} \ast \cdots \ast L_{4,2r-3} \ast T_{4,6} \ast L_{6,7} \ast L_{6,9} \ast \cdots \ast L_{6,2r-3} \ast T_{6,8} \ast \cdots$$
$$\cdots \ast T_{2l,2l+2}\ast L_{2l+2,2l+3}\ast T_{2l+2,2l+4} \ast \cdots \ast L_{2l+2,2r-3} \ast \cdots \ast T_{2r-6,2r-4} \ast L_{2r-4,2r-3} \ast T_{2r-4,2}.$$
Lemma \ref{L:BirecurrentCompleteGraphltts} implies, with a suitable switch, $\gamma_I$ induces an admissible construction composition $g_{\gamma_I}$. By Lemma \ref{P:PathConstruction}, the only purple edges left to construct are the $[x_{2r-1}, x_i]$ for each $1 \leq i \leq 2r-2$.

The initial ltt structure for the pure construction composition has red edge (red vertex first) $[x_{2r}, x_1]$. The switch for $g_{\gamma_I}$ will be determined by the purple edge $[x_2, x_3]$ and has initial ltt structure with red vertex $x_2$ and red edge $[x_2, x_3]$. In this ltt structure, the preimages of the purple edge left are $[x_{2r-1}, x_i]$, for each $1 \leq i \leq 2r-2$ with $i \neq 2$, and $[x_{2r-1}, x_{2r}]$. We ensure inclusion of all these edges except $[x_1, x_{2r-1}]$ by the construction loop $\gamma_{II}= T_{4,2r-1} \ast L_{2r-1,3} \ast L_{2r-1,5} \ast \cdots \ast L_{2r-1,2r-3} \ast T_{2r-1,4}.$ The red edge for the pure construction composition is (red vertex first) $[x_2, x_3]$.

The next switch is determined by the purple edge $[x_4, x_5]$ and has initial ltt structure with red edge $[x_4, x_5]$ and red vertex $x_4$. In this ltt structure, the preimage of the purple edge $[x_1, x_{2r-1}]$ left still looks like $[x_1, x_{2r-1}]$. We construct it using the construction loop $[x_6, x_2, x_1, x_{2r-1}, x_{2r}, x_5, x_6]$. The associated pure construction composition has initial ltt structure with red vertex $x_4$, and red edge $[x_4,x_5]$.

We use a sequence of switches to ensure that $g$ has PF transition matrix (see Lemma \ref{L:SwitchSequence}):
~\\
\vspace{-1mm}
$$G_6 \xrightarrow{g_7: x_{2r} \mapsto x_{2} x_{2r}} G_7 \xrightarrow{g_8: x_{2r-2} \mapsto x_{2r} x_{2r-2}} G_8 \xrightarrow{g_9: x_{2r-4} \mapsto x_{2r-2} x_{2r-4}} \dots$$
$$\xrightarrow{g_{r+3}: x_8 \mapsto x_{10} x_8} G_{r+3} \xrightarrow{g_{r+4}: x_6 \mapsto x_8 x_6} G_{r+4} \xrightarrow{g_{r+5}: x_4 \mapsto x_6 x_4} G_{r+5}$$
where, listed red vertex first:
$e^R_6=[x_{2},x_{2r-1}]$,
$e^R_7=[x_{2r},x_{1}]$,
$e^R_8=[x_{2r-2},x_{2r-1}]$,
$e^R_9=[x_{2r-4},x_{2r-3}]$,
$e^R_{r+2}=[x_{10},x_{11}]$,
$e^R_{r+3}=[x_8,x_9]$,
$e^R_{r+4}=[x_6,x_7]$, and
$e^R_{r+5}=[x_4,x_5]$.

We end with the Nielsen path prevention sequence (see Lemma \ref{L:NPKilling})
~\\
\vspace{-1mm}
$$G_0 \xrightarrow{g_1: x_5 \mapsto x_{2r} x_5} G_1 \xrightarrow{g_2: x_3 \mapsto x_5 x_3} G_2 \xrightarrow{g_3: x_2 \mapsto x_3 x_2} G_3 \xrightarrow{g_4: x_2 \mapsto x_{2r} x_2} G_4 \xrightarrow{g_5: x_2 \mapsto x_3 x_2} G_5 \xrightarrow{g_6: x_2 \mapsto x_{2r} x_2} G_6$$ where, listed red vertex first:
$e^R_0=[x_{2r},x_1]$,
$e^R_1=[x_5,x_{2r-1}]$,
$e^R_2=[x_3,x_6]$,
$e^R_3=[x_2,x_4]$,
$e^R_4=[x_2,x_{2r-1}]$,
$e^R_5=[x_2,x_4]$,
$e^R_6=[x_{2},x_{2r-1}]$.

Take an adequately high power so that all periodic directions are fixed.

By Lemma \ref{P:RepresentativeLoops}, it suffices that the constructed representatives are (1) pNp-free, (2) have PF transition matrices, and (3) have the appropriate ideal Whitehead graphs. Lemma \ref{L:NPKilling} implies (1), Lemma \ref{L:SwitchSequence} implies (2), and Lemma \ref{P:PathConstruction} implies (3). \qedhere
\end{proof}

\newpage

\bibliographystyle{amsalpha}
\bibliography{PaperReferences}

\end{document}